\documentclass[a4paper, 12pt]{amsart}
\usepackage{amsmath,amsthm,amsfonts,amssymb}
\setlength{\parskip}{7pt}

\textwidth 6.5 in
\oddsidemargin -0.1 in
\evensidemargin -0.1 in

\theoremstyle{plain}

\newtheorem{thm}{Theorem}
\newtheorem*{thmA}{Theorem A}

\newtheorem{lemma}{Lemma}
\newtheorem{cor}{Corollary}
\newtheorem{step}{Step}
\newtheorem{claim}{Claim}
\newtheorem{remark}{Remark}
\newtheorem*{conj}{Conjecture}
\newtheorem*{thm*}{Theorem}

\newcommand{\Syl}{\operatorname{Syl}}

\newcommand{\Aut}{\operatorname{Aut}}
\newcommand{\Ker}{\operatorname{Ker}}
\newcommand{\Lin}{\operatorname{Lin}}

\newcommand{\Irr}{\operatorname{Irr}}

\newcommand{\F}{\mathcal{F}}
\newcommand{\E}{\mathcal{E}}

\title{On Distinct  Character Degrees}
\author{ Maria Loukaki }

\address{Dept. of Applied  Mathematics, University of Crete, Knossos Av. GR-71409, Heraklion-Crete, Greece}
\email{loukaki@gmail.com}

\begin{document}

\begin{abstract}
 Berkovich, Chillag and Herzog characterized all finite groups $G$ in which all the nonlinear 
irreducible characters  of $G$  have distinct degrees.
In this paper we extend this result showing  that a similar characterization holds for all finite solvable groups $G$  that contain a normal subgroup $N$, such that 
 all the irreducible characters of $G$ that do not contain $N$ in their kernel have distinct degrees.
\end{abstract} 

 \maketitle

\section{Introduction}
Let $N \ne 1$ be a normal subgroup of the finite group $G$.
We write $\Irr(G|N)$ for the set of irreducible characters of $G$ that do not 
contain $N$ in their kernel. 
We also say that {\it $ (G, N) $ has property (D) }, or that $G$ satisfies (D) with respect to $N$, 
 if all the  irreducible characters of $\Irr(G|N)$ have distinct degrees.
If $N= G'$ then  $(G, G')$ has property (D) exactly when all the nonlinear irreducible characters of $G$  have distinct degrees.  
These groups have been  fully characterized by Berkovich, Chillag and Herzog in \cite{bch}.  In particular, they have proved that a nonabelian group $G$ with the property that all its nonlinear irreducible characters have distinct degrees is either an extra special $2$-group, 
or a doubly transitive Frobenius group with a cyclic complement or a doubly transitive Frobenius group of order 72 having a quaternion complement. 

In the present note, we extend their result proving that  a similar characterization 
 holds  for a solvable  group $G$ that satisfies property (D)
with respect to a minimal normal subgroup $N$.
In particular we show (for the definition of Camina pairs see Section  2):
\begin{thmA}
Assume that $G$ is a solvable group, while $N$ is a minimal normal subgroup of $G$ of
 order $p^n$ for some prime $p$. 
If $(G, N)$ has property $(D)$ then  $(G, N)$ is a Camina pair, with $N$ 
being the unique minimal normal subgroup of $G$. Furthermore, $O_{p'}(G)= 1$ and the action of any $p'$-Hall subgroup $H$  of $G$ on $N$ is Frobenius.
 In particular $(G, N)$ has property (D) with $N < G$ if and only if  one of the following
 occurs \\
(1)  $G$ is a $2$-group of order $2^{2m+1}$ for some integer $m \geq 0$, while  $N = Z(G)$ is
  of order $2$.
 In addition, $G$ affords a unique faithful irreducible character whose  degree is $2^m$.\\
(2) $G$ is a Frobenius group with Frobenius kernel  $N$ and  complement of order $p^n-1$
which  acts   transitively on $N^{\#}$. In this case, $G$ affords a unique faithful 
irreducible character of degree $p^n-1$. \\
(3) $G$ is neither nilpotent nor Frobenius, but satisfies $O_{p'}(G)= 1= Z(G)$. In addition, 
if  $J = O^{p'}(G)$ then $(J, N)$ is also a Camina pair.
\end{thmA}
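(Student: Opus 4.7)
The central identity driving the argument is
\[
\sum_{\chi \in \Irr(G|N)} \chi(1)^2 \;=\; |G| - |G/N|,
\]
obtained by subtracting the contribution of $\Irr(G/N)$ from the orthogonality relation $\sum_{\chi \in \Irr(G)} \chi(1)^2 = |G|$. Combined with the hypothesis that the degrees $\chi(1)$ for $\chi \in \Irr(G|N)$ are pairwise distinct positive integers dividing $|G|$, this identity is tight enough to drive the whole classification.

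The plan is first to establish that $N$ is the unique minimal normal subgroup of $G$, from which $O_{p'}(G) = 1$ is immediate (any minimal normal subgroup of $O_{p'}(G)$ would be a $p'$-group, hence distinct from $N$). If $M \neq N$ were a second minimal normal subgroup, then $M \cap N = 1$; one then produces from a suitable $\chi \in \Irr(G|N)$ that is nontrivial on $M$ two distinct characters of the same degree in $\Irr(G|N)$, either by tensoring with nontrivial linear characters of $G$ lifted from $MN/N$, or (if $M$ is a $p$-group) through a Clifford correspondence applied to the $G$-orbit of some $\lambda \in \Irr(M)$, contradicting property (D). The main technical step follows: proving that $(G,N)$ is a Camina pair, i.e.\ that $\chi(g) = 0$ for every $\chi \in \Irr(G|N)$ and every $g \in G \setminus N$. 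The key tool is column orthogonality, which for $g \notin N$ gives
\[
\sum_{\chi \in \Irr(G|N)} |\chi(g)|^2 \;=\; |C_G(g)| - |C_{G/N}(gN)|,
\]
and the analogous formula at $1 \neq z \in N$; coupled with the degree identity and the distinctness hypothesis, these constraints can be shown to force the above sum to vanish on every coset outside $N$. This Camina step is where I expect the real work to lie.

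Once the Camina property is in hand, a $p'$-Hall subgroup $H$ of $G$ (which exists by solvability) must act on $N$ without nontrivial fixed points: any $H$-fixed $z \in N \setminus \{1\}$ would satisfy $C_G(z) \supsetneq N$, violating the Camina condition. This yields the Frobenius action asserted in Theorem~A, and the three cases follow from the possibilities for $H$. If $H = 1$ then $G$ is a $p$-group; $N \le Z(G)$ and minimality force $|N| = p$; the unique faithful character of $\Irr(G|N)$ has squared degree $|G|(p-1)/p$, and since that degree is a power of $p$ one deduces $p = 2$ and $|G| = 2^{2m+1}$, producing case~(1). If $H$ acts transitively on $N \setminus \{1\}$, then $|H| = p^n - 1$ and $G = NH$ is Frobenius with kernel $N$, giving case~(2). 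All remaining configurations comprise case~(3), and the final assertion that $(O^{p'}(G), N)$ is again a Camina pair is inherited directly from $(G,N)$ by intersecting the Camina condition with $J = O^{p'}(G)$.
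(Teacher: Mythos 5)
Your outline has the right overall shape, but it is missing the single result that powers the paper's entire proof, and the two steps you flag as "where the real work lies" are in fact left unproved. The engine of the argument is Theorem~E of Berkovich--Isaacs--Kazarin (Lemma~\ref{l2.0}/\ref{l2.2} in the paper): since $(G,N)$ has property (D), every non-principal $\lambda\in\Irr(N)$ is \emph{fully ramified} in $G(\lambda)/N$ and $G(\lambda)$ is a $p$-group. This is what makes everything else work: it gives the Frobenius action of $p'$-subgroups on $N$ immediately (no appeal to the Camina property needed, and note that your derivation is stated backwards --- the Camina condition constrains $C_G(x)$ for $x\notin N$, not $C_G(z)$ for $z\in N$); it gives uniqueness of the minimal normal subgroup cleanly (a second minimal normal $M$ would force the unique character over $\lambda$ to have degree both $|G:N|^{1/2}$ and $|G:NM|^{1/2}$); and, crucially, it gives that \emph{exactly one} $\chi\in\Irr(G|N)$ lies over each $G$-orbit of $\Lin(N)$. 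Your column-orthogonality identities are true but do not by themselves force $\sum_{\chi\in\Irr(G|N)}|\chi(g)|^2=0$ off $N$; the paper's actual Camina argument is a tensoring trick: for $\theta\in\Irr(G/N)$ the product $\theta\chi$ restricts to $N$ as $\theta(1)\chi_N$, hence lies over the same orbit of $\lambda$, hence equals $\theta(1)\chi$ by the uniqueness just mentioned, so $\chi$ vanishes off $\Ker(\theta)$; intersecting over all $\theta$ with $N\le\Ker(\theta)$ gives vanishing off $N$, and only then does orthogonality yield $|C_G(x)|=|C_{G/N}(xN)|$. Without some such argument your Camina step is a genuine gap, not a deferred computation.

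Two further points fail as stated. First, the assertion that $(J,N)$ with $J=O^{p'}(G)$ is a Camina pair is \emph{not} "inherited directly by intersecting": for $x\in J\setminus N$ you need $x$ to be $J$-conjugate (not merely $G$-conjugate) to $xy$ for all $y\in N$, and restricting $\chi\in\Irr(G|N)$ to $J$ only shows that a \emph{sum} of $J$-conjugate constituents vanishes off $N$, not each constituent. The paper obtains this, together with the whole of case (3), from Kuisch's theorem, which is a substantial external result. Second, your trichotomy "$H=1$ / $H$ transitive on $N^{\#}$ / else" does not match cases (1)--(3): transitivity of $H$ on $N^{\#}$ does not give $G=NH$ (the Sylow $p$-subgroup of $G$ need not equal $N$), and you never address why $Z(G)=1$ in case (3). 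The paper's actual split is: $G$ nilpotent (then a $p$-group, $N\le Z(G)$, and distinctness of the $p-1$ equal-degree fully ramified characters forces $p=2$, giving (1)); $G$ Frobenius with kernel $N$ (then property (D) forces a single induced character, so $|G:N|=|N|-1$, giving (2)); otherwise $Z(G)=1$ because a nontrivial center would contain the unique minimal normal subgroup $N$ and throw you back into case (1), and Kuisch's theorem gives (3).
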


 Observe that if $G$ satisfies property (D)
with respect to $M \unlhd G$, then $G$ satisfies (D) with respect to $N$ for every normal subgroup $N$ 
of $G$ with $N \leq M$ (since  $\Irr((G|N ) \leq \Irr(G|M)$  when $N \leq M$).
 Hence Theorem A characterize  the solvable groups that satisfy property (D) with
respect to  some normal subgroup $M$.

The groups of type (3) in Theorem A,  have been already classified by Kuisch in \cite{ku}, 
where he proves,  see Theorem B in \cite{ku},  the following:
\begin{thm}[Kuisch]
Let $(G, N)$  be  a Camina pair, $G$ a solvable group and $N$ a $p$-group. Then 
$O_{p'}(G)=1$. If $J = O^{p'}(G)$ then $(J, N)$ is also a Camina pair and  
 one of the following holds:\\
(i) $J \in \Syl_p(G)$ or \\
(ii) $O_p(J)= O_p(G), \,  O_{p, p', p}(J)= J, \,  O_{p, p'}(J)/O_p(J)$ is cyclic of odd 
 order, $J/O_{p, p'}(J)$ is an abelian $p$-group, and  $J/O_{p, p'}(J)$ acts fixed point-freely 
on $O_{p, p'}(J)/O_p(J)$ or \\
(iii) $p=3, \,  O_3(J)= O_3(G), \,   O_{3, 3', 3}(J)= J, \, 
 O_{3, 3'}(J)/O_3(J)$ is  the direct product of a quaternion group of order $8$ and  a  cyclic
 of odd order, $J/O_{3, 3'}(J)$ is abelian, and 
 $$
[J/O_{3, 3'}(J), O_{3, 3'}(J)/O_3(J)]= O_{3, 3'}(J)/O_3(J).
$$
\end{thm}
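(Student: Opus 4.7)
The plan is to reduce the problem to understanding $J = O^{p'}(G)$ and then analyse $\bar J = J/O_p(J)$ using the Frobenius-like action on $N$ forced by the Camina condition.

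First I would verify $O_{p'}(G)=1$. For any nontrivial $x\in O_{p'}(G)$, we have $x\notin N$ (as $N$ is a $p$-group), so the Camina hypothesis forces $x$ to be $G$-conjugate to $xn$ for every $n\in N$. Taking $n\in C_N(x)$, the product $xn$ has order $|x|\cdot|n|$, and matching orders forces $C_N(x)=1$. But $[O_{p'}(G), N]\leq O_{p'}(G)\cap N=1$, so $O_{p'}(G)$ centralises $N$; hence $O_{p'}(G)=1$. Next I would show $(J, N)$ is itself a Camina pair: for $g\in J\setminus N$, the conjugacy class $g^G$ equals $gN$ (a standard consequence of the Camina condition), and the number of $J$-orbits on $gN$ is a $p'$-number (dividing $|G/J|$) that also divides $|N|$, hence equals $1$. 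Since $J$ is generated by the $p$-elements of $G$, we have $O_p(G)\leq J$ and so $O_p(J)=O_p(G)$; applying the argument of the first step to $(J, N)$ yields $O_{p'}(J)=1$, so the Fitting subgroup $F(J)=O_p(J)$ is self-centralising in $J$.

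The crucial structural input is a repeat of the order argument: every nontrivial $p'$-element of $J$ has trivial centraliser in $N$, so any $p'$-subgroup $H\leq J$ acts fixed-point-freely on $N$, and $HN$ is a Frobenius group with complement $H$. If $J\in\Syl_p(G)$ we are in case (i). Otherwise I would fix a $p'$-Hall subgroup $H$ of $J$: by the above $H$ is a Frobenius complement, so its Sylow subgroups are cyclic or generalised quaternion. Passing to $\bar J=J/O_p(J)$, which acts faithfully on a suitable elementary abelian $p$-section of $O_p(J)$, I would combine this faithful action with the Frobenius-complement structure of $H$ and a Hall--Higman style argument to produce a normal $p'$-subgroup $\bar K=O_{p,p'}(J)/O_p(J)$ of $\bar J$ with $\bar J/\bar K$ a $p$-group; equivalently, $O_{p,p',p}(J)=J$.

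The final task is to pin down $\bar K$ and the action of $\bar J/\bar K$. Lifting $\bar K$ via Schur--Zassenhaus to a $p'$-subgroup $K_0\leq J$, we see $K_0$ is itself a Frobenius complement. The hypothesis that $\bar J/\bar K$ is an abelian $p$-group acting on $\bar K$ sharply restricts the possibilities: if the induced action is fpf we extract a cyclic Frobenius complement of odd order, giving case (ii); otherwise the non-fpf case forces $p=3$, with $\bar K\cong Q_8\times C_m$ ($m$ odd) and the commutator condition $[\bar J/\bar K, \bar K]=\bar K$ supplied by the unique order-$3$ outer automorphism of $Q_8$ permuting its three maximal cyclic subgroups, giving case (iii). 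I expect the main obstacle to be this last step: establishing that $\bar K$ must be cyclic of odd order (respectively $Q_8\times C_m$), and determining precisely when the abelian $p$-group $\bar J/\bar K$ acts fpf on $\bar K$. The exceptional role of $p=3$ arises because the outer automorphism group of $Q_8$ has order $6$ with a unique element of order $3$, and this single coincidence has to be matched against the fpf constraint on $N$ to rule out every other prime.
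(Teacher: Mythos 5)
A preliminary remark: the paper offers no proof of this statement at all --- it is quoted verbatim as Theorem B of Kuisch's paper \cite{ku} and used as a black box --- so your proposal can only be judged on its own merits, not against an internal argument.

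Your opening moves are sound: the order argument showing that every nontrivial $p'$-element $x$ of $G$ has $C_N(x)=1$ (since $xn$ with $n\in C_N(x)$ has order $|x|\,|n|$ yet must be conjugate to $x$) correctly gives $O_{p'}(G)=1$ and the Frobenius action of any $p'$-subgroup on $N$. Two later steps, however, are genuinely gapped. First, your proof that $(J,N)$ is a Camina pair rests on the claim that $g^G=gN$; this is false in general, since the Chillag--Macdonald equality $|C_G(g)|=|C_{G/N}(gN)|$ gives $|g^G|=|N|\cdot|(gN)^{G/N}|$, so $g^G$ is a union of $N$-cosets and equals $gN$ only when $gN$ is central in $G/N$ (already in the Frobenius group of order $72$ with complement $Q_8$ one has $|x^G|=18>9=|xN|$ for $x$ of order $4$). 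Consequently ``the $J$-orbits on $gN$'' is not even well defined, as $J$ need not stabilize the coset $gN$. Your counting idea can be repaired: the preimage $T$ of $C_{G/N}(gN)$ in $G$ has order $|N|\,|C_G(g)|$ and acts transitively on $gN$ by conjugation; the orbits of the normal subgroup $T\cap J$ of $T$ then all have equal size, and their number divides both the $p'$-number $|T:T\cap J|$ and the $p$-power $|N|$, whence $gN\subseteq g^{J}$. Second, and more seriously, the actual content of the trichotomy --- that $O_{p,p',p}(J)=J$, that $J/O_{p,p'}(J)$ is abelian, and that $O_{p,p'}(J)/O_p(J)$ is cyclic of odd order or $Q_8\times C_m$ with $p=3$ --- is not established. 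Your sketch explicitly treats ``$\bar J/\bar K$ is an abelian $p$-group'' as a hypothesis when it is a conclusion to be proved, and ``a Hall--Higman style argument'' is invoked but not supplied; a priori $J$ could have larger $p$-length, and a solvable Frobenius complement need not sit in the normal position you need nor have the shape you assert (complements of $SL(2,3)$-type, for instance, must be excluded by argument rather than by fiat). These points are where the prime $3$ and the group $Q_8$ actually emerge, and they occupy the bulk of Kuisch's paper; without them the proposal is a plausible roadmap rather than a proof.
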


Hence if $G$ is a solvable group that satisfies (D) with respect to a 
minimal normal subgroup $N$, 
then $G$ is either a $2$-group or a Frobenius group or one 
of the 3 types of groups that appear in Kuisch's list.
Even though we have a clear image of the groups of type (i) in this 
list that in addition have property (D), see  Corollary \ref{cor1} of Section 2 below, 
we are quite uncertain how the other two types can coexist with property (D), as we have not
 been able to construct such examples.

Theorem A was inspired by the paper \cite{bik} of Berkovich, Isaacs and Kazarin, 
 (see Corollary 4.5 in \cite{bik}) where under the same 
hypothesis of Theorem A several properties of the 
 solvable group $G$ are given.
The key tool in proving Corollary 4.5 in \cite{bik}, along with many other interesting 
theorems,  was the following result (Theorem E in \cite{bik}):\\
Let  $N \unlhd G$ be a $p$-group  while $G/N$ is solvable, and let  $\theta \in \Irr(N)$ be $G$-invariant. If the members of $\Irr(G|\theta)$ have distinct degrees then $|\Irr(G|\theta)|=1$, and $G$ is a $p$-group.\\
It turns out that this is a case of the following  conjecture formed by Higgs in \cite{higgs}.
\begin{conj}
Let $N \unlhd G$ and $\theta \in \Irr(N)$ be $G$-invariant.
If all the members of $\Irr(G |\theta)$ have  distinct degrees then there is only one irreducible character of $G$ lying above $\theta$.
\end{conj}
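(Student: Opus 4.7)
The plan is to apply Clifford theory to reduce to the case where $\theta$ is a faithful linear character of a central cyclic subgroup, and then to search for numerical and structural obstructions that force $|\Irr(G|\theta)|=1$ under the distinct-degree hypothesis. By the standard character-triple isomorphism theorem, the triple $(G,N,\theta)$ is isomorphic to a triple $(\Gamma,Z,\lambda)$ with $Z\le Z(\Gamma)$ cyclic and $\lambda$ a faithful linear character of $Z$; this isomorphism preserves $|\Irr(G|\theta)|$ and scales character degrees by a fixed positive factor, so both the hypothesis and the conclusion transfer. One may therefore assume that $N$ is central cyclic in $G$ and $\theta$ is faithful linear. Under this reduction, the characters in $\Irr(G|\theta)$ correspond bijectively to the irreducible projective representations of $\bar G:=G/N$ with a factor set $\alpha$ determined by $\theta$, and their degrees $e_i:=\chi_i(1)/\theta(1)$ are positive integers satisfying $\sum_i e_i^2=|\bar G|$.

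Suppose for contradiction that there are $k\ge 2$ characters in $\Irr(G|\theta)$, with distinct degrees $e_1<e_2<\cdots<e_k$. I would then argue by induction on $|G|$. Choose $M\unlhd G$ with $N<M\le G$ and $M/N$ a chief factor of $G/N$, and for each $\chi_i$ apply Clifford theory to $M$: either $\chi_i\mid_M$ is homogeneous over some $\eta_i\in\Irr(M|\theta)$, or $\chi_i$ is induced from a proper inertia subgroup $T_i<G$. In the induced case, several $G$-conjugates of the Clifford correspondent of $\chi_i$ induce to $\chi_i$, so the distinct-degree condition re-encodes at the strictly smaller triple $(T_i,M,\eta_i)$ and the inductive hypothesis applies. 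In the homogeneous case, the Clifford correspondence provides a bijection between $\Irr(G|\theta)$ and the disjoint union of the sets $\Irr(G_\eta|\eta)$ as $\eta$ ranges over $G$-orbit representatives in $\Irr(M|\theta)$, preserving degrees; each fibre thus carries its own distinct-degree condition to which induction applies, and one is left to show that no two distinct orbits $\eta$, $\eta'$ can coexist, which should follow from the sum-of-squares identity at the level of $M$ combined with orbit-size information.

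The main obstacle, as I see it, is twofold. First, the numerical constraint $\sum_i e_i^2=|\bar G|$ with distinct positive integers $e_i$ is far too weak to force $k=1$ on its own -- for instance $|\bar G|=5$ already admits $(e_1,e_2)=(1,2)$ -- so one must crucially exploit that the $e_i$ are the irreducible degrees of a \emph{single} twisted group algebra. Here It\^o's theorem on abelian normal subgroups (in its projective form) and Gallagher-type restriction results are natural inputs, but they do not obviously close the argument. Second, and more seriously, the induction breaks down precisely when $\bar G$ admits no proper nontrivial normal subgroup, that is, when $\bar G$ is simple non-abelian and the factor set $\alpha$ represents a nontrivial class in the Schur multiplier $M(\bar G)$; in that situation one must directly analyse the ordinary character tables of the covering groups of the finite simple groups, which presumably requires the classification. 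For these reasons I expect a complete proof to lie beyond purely Clifford-theoretic manipulation, although the solvable case with $N$ a minimal normal subgroup handled by Theorem A -- where $(G,N)$ is forced to be a Camina pair -- strongly suggests the correct structural shape of the general answer.
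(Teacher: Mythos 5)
The statement you were asked to prove is presented in the paper as a \emph{conjecture} (due to Higgs), and the paper does not prove it in general: it establishes only the special cases where $G/N$ is supersolvable (Theorem \ref{thm1}) or of odd order (Theorem \ref{thm2}). Your proposal is likewise not a proof --- you say as much yourself --- so the verdict has to be that there is a genuine gap, indeed an admitted one. That said, your opening reduction is exactly the one the paper uses in its special cases: replace $(G,N,\theta)$ by a character-triple-isomorphic situation in which $N$ is central cyclic and $\theta$ is faithful linear, then induct on $|G|$ and $|G:N|$ via a chief section $M/N$, using the inductive hypothesis inside each stabilizer $G(\phi_i)$ to conclude that each extension $\phi_i$ of $\theta$ to $M$ is fully ramified in $G(\phi_i)$, so that $|\Irr(G|\theta)|$ equals the number of $G$-orbits on $\Irr(M|\theta)$ and the degree formula $\chi_i(1)=|G:G(\phi_i)|\cdot|G(\phi_i):M|^{1/2}\phi_i(1)$ forces the orbit sizes to be pairwise distinct. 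Up to that point you and the paper agree.

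The two ingredients you are missing --- and they are what actually close the argument in the cases the paper handles --- are these. First, Corollary C of \cite{is1}: if exactly two irreducible characters of $G$ lie over a $G$-invariant $\theta\in\Irr(N)$, then those two characters have \emph{equal} degree. This is the lever that converts ``there are at most two orbits'' into a contradiction with the distinct-degree hypothesis; your sum-of-squares constraint $\sum_i e_i^2=|\bar G|$ is, as you correctly observe, far too weak, and It\^o/Gallagher alone do not substitute for it. Second, one needs a mechanism forcing the $G$-orbits on the nontrivial part of $\Irr(M/N)$ to collapse to a single orbit: in the supersolvable case $|M/N|=p$ and $G/C_G(M/N)$ is cyclic of order dividing $p-1$, so all nontrivial orbits have the same length and $s=2$ follows; in the odd-order case the paper uses Lemma \ref{orb} (or Dade's observation that the orbits of $v$ and $-v$ have equal length, so distinct orbit sizes force transitivity on $V^*$), again yielding $s=2$, whence Corollary C applies. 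Your worry about nonabelian simple quotients and nontrivial Schur multiplier classes is a real obstruction to the full conjecture --- which remains open in the paper --- but it is beside the point for what can actually be proved here, since the hypotheses in force guarantee an abelian chief section to work with. If you want to salvage your write-up, restrict the claim to supersolvable or odd-order $G/N$ and supply the two missing inputs above.
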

In the same paper  (\cite{higgs}) Higgs proved his conjecture when $G$ is supersolvable or has odd order.  His methods use projective characters 
(actually the conjecture itself was also formed in terms of projective characters). 
In the last section of this note we give a different proof of Higgs's
results. In particular we prove 
\begin{thm}\label{thm1}
Let $G$ be a finite group, $N$ a normal subgroup of $G$ and $\theta$ an 
irreducible $G$-invariant character of $N$. Assume further that  all irreducible 
characters of $G$ lying above $\theta$ have distinct degrees.
If $G/N$ is a supersolvable group then $\theta$ is fully ramified in $G/N$.
\end{thm}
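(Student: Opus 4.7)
The plan is to proceed by induction on $|G/N|$; the base case $G=N$ is trivial, giving $\chi=\theta$. For the inductive step I would choose a minimal normal subgroup $K/N$ of $G/N$, which is cyclic of prime order $p$ by supersolvability. The obstruction to extending $\theta$ from $N$ to $K$ lies in $H^2(K/N,\C^*)=1$, so $\theta$ admits $p$ distinct extensions $\tilde\theta_0=\tilde\theta,\tilde\theta_1,\ldots,\tilde\theta_{p-1}\in\Irr(K)$. Under the identification $\tilde\theta\lambda\leftrightarrow\lambda$ of the set of extensions with $\Lin(K/N)\cong\ZZ/p$, the conjugation action of $G/K$ becomes affine, and I would split the argument according to its orbit structure.

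If every extension is $G$-invariant, the inductive hypothesis applied to each triple $(G,K,\tilde\theta_i)$ (distinct degrees pass to $\Irr(G|\tilde\theta_i)\subseteq\Irr(G|\theta)$) shows that each $\tilde\theta_i$ is fully ramified in $G/K$, so $|\Irr(G|\theta)|=p$ characters of common degree $\sqrt{[G:K]}\,\theta(1)$, contradicting distinct degrees when $p\geq 2$. If $G/K$ acts transitively, the linear part of the action must be trivial (otherwise it would have a fixed point), so the stabilizer $T$ of one extension is the kernel of a surjection $G/K\to\ZZ/p$, whence $T\trianglelefteq G$ with $[G:T]=p$. Induction applied to $(T,K,\tilde\theta_0)$ produces a unique $\psi\in\Irr(T|\tilde\theta_0)$ with $\tilde\theta_0$ fully ramified in $T/K$, and $\chi:=\operatorname{Ind}_T^G\psi$ is then the unique member of $\Irr(G|\theta)$, with $\chi(1)^2=p^2[T:K]\theta(1)^2=[G:N]\theta(1)^2$, establishing full ramification of $\theta$ in $G/N$.

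The main obstacle is the remaining case, in which the affine action has nontrivial linear part. A preliminary degree count rules out any configuration with two or more nontrivial orbits of the same size (those would contribute characters of $G$ with equal degrees), leaving only the configuration of one fixed extension $\tilde\theta$ together with one orbit $\{\tilde\theta_1,\ldots,\tilde\theta_{p-1}\}$ of size $p-1$ (and therefore $p\geq 3$). Writing $\chi_0,\chi_1$ for the unique characters of $G$ above $\tilde\theta$ and above the non-trivial orbit, the common stabilizer $T$ of that orbit is normal in $G$ with $G/T\cong\Aut(K/N)\cong\ZZ/(p-1)$ cyclic, and Clifford theory gives $\chi_1=\operatorname{Ind}_T^G\psi_1$ with $\tilde\theta_1$ fully ramified in $T/K$. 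I would aim to prove that $\tilde\theta$ is also fully ramified in $T/K$: granted this, the unique $\psi_0\in\Irr(T|\tilde\theta)$ is $G$-invariant, extends in $p-1$ distinct ways to $G$ by cyclicity of $G/T$, and all extensions must lie in the singleton $\Irr(G|\tilde\theta)=\{\chi_0\}$, yielding $p-1\leq 1$, a contradiction.

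The decisive step is thus the full ramification of $\tilde\theta$ in $T/K$. A natural attack begins with the twisting action of $\Lin(G/N)$ on $\Irr(G|\theta)=\{\chi_0,\chi_1\}$: for $\lambda\in\Lin(G/N)$ with $\lambda|_K\neq 1$ the character $\chi_0\otimes\lambda$ lies in $\Irr(G|\tilde\theta\cdot\lambda|_K)=\{\chi_1\}$, forcing the impossible identity $d\,\theta(1)=\sqrt{p-1}\,d\,\theta(1)$; thus $\Lin(G/N)=\Lin(G/K)$, equivalently $K/N\leq(G/N)'$. Converting this structural constraint into the vanishing of the obstruction for $\lambda_1:=\tilde\theta_1/\tilde\theta\in\Lin(K/N)$ to extend from $K$ to $T$, so that the cocycles of $\tilde\theta$ and $\tilde\theta_1$ in $H^2(T/K,\C^*)$ coincide and $\tilde\theta$ inherits the nondegenerate cocycle of $\tilde\theta_1$, is the hardest step, and is where the supersolvable structure of $G/N$ (through the cyclic quotient $G/T$) enters most crucially.
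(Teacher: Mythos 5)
Your reduction to the critical configuration is essentially the paper's: both arguments use induction plus Clifford theory on a chief factor $K/N$ of order $p$ to show that each orbit of extensions contributes exactly one character of $G$, that orbit sizes must be distinct, and (via Glauberman, or your affine-action language) that the only surviving non-transitive, non-invariant configuration is one fixed extension $\tilde\theta$ together with a single orbit of length $p-1$, giving exactly two characters $\chi_0,\chi_1$ with $\chi_1(1)=\sqrt{p-1}\,\chi_0(1)$. (A small slip on the way: in the transitive case the linear part need \emph{not} be trivial --- the image can be the full affine group $\mathrm{AGL}(1,p)$ --- so $T$ need not be normal of index $p$; this is harmless, since the Clifford correspondence for the index-$p$ stabilizer gives the degree count without normality.)

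The genuine gap is in the final case. The paper kills the two-character configuration by citing Corollary C of Isaacs (\emph{Blocks with just two irreducible Brauer characters in solvable groups}): if exactly two irreducible characters of $G$ lie over a $G$-invariant character of a normal subgroup, they have equal degrees. That is a substantial external theorem, and your proposal replaces it with an unproved intermediate claim --- that $\tilde\theta$ is fully ramified in $T/K$ --- which you explicitly flag as ``the hardest step'' and only sketch via a cocycle comparison. Worse, that claim is not merely unproved but generally untenable as a target. Since $\Irr(G\mid\tilde\theta)=\{\chi_0\}$, one has $\chi_0\mu=\chi_0$ for every $\mu\in\Lin(G/T)\subseteq\Lin(G/K)$, so $\chi_0$ vanishes off $T$ and
$$
[(\chi_0)_T,(\chi_0)_T]=|G:T|=p-1,
$$
whence $(\chi_0)_T=m(\psi_{0,1}+\dots+\psi_{0,k})$ with $km^2=p-1$ and $k=|\Irr(T\mid\tilde\theta)|$. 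Thus $k=1$ forces $p-1$ to be a perfect square; already for $p=3$ you necessarily get $k=2$, so $\tilde\theta$ is \emph{not} fully ramified in $T/K$ in that configuration and your concluding contradiction ($p-1\le 1$) never materializes. The structural fact you do establish, $K/N\le (G/N)'$, points in the wrong direction for extending $\lambda_1=\tilde\theta_1\overline{\tilde\theta}$ from $K$ to $T$ (which would need $(K/N)\cap(T/N)'=1$). So as written the proof is incomplete exactly where the real difficulty sits; you either need to prove something equivalent to Isaacs's Corollary C or invoke it.
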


\begin{thm}\label{thm2}
Let $G$ be a finite group, $N$ a normal subgroup of $G$ and $\theta$ an 
irreducible $G$-invariant character of $N$. Assume further that  all irreducible 
characters of $G$ lying above $\theta$ have distinct degrees.
If $G/N$ has odd order then $\theta$ is fully ramified in $G/N$.
\end{thm}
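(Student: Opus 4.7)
The plan is to reduce via Isaacs' character triple isomorphism to the case where $N \leq Z(G)$ and $\theta$ is linear and faithful, then exploit a tensoring symmetry and induct on $|G/N|$. First I would replace $(G,N,\theta)$ by an isomorphic triple $(\hat G,\hat N,\hat\theta)$ with $\hat N \leq Z(\hat G)$ and $\hat\theta$ linear faithful. This preserves the odd-order quotient, the distinct-degree hypothesis, and the fully ramified conclusion, so I assume henceforth the reduced situation.

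The key observation is this: for any $\chi \in \Irr(G|\theta)$ and any $\beta \in \widehat{G/N}$, the product $\chi \otimes \beta$ again lies in $\Irr(G|\theta)$, since $(\chi\beta)|_N = \chi(1)\theta$ (as $\beta$ is trivial on $N$), and has the same degree $\chi(1)$. Distinct degrees thus forces $\chi\beta = \chi$ for every such $\beta$, which pointwise means $\chi(g) = 0$ for every $g \notin G'N$ (the linear characters of $G/N$ separate cosets of $G'N$). In the special case that $G/N$ is abelian, $G'N = N$ and $\chi$ vanishes off $N$; the orthogonality relation $\langle\chi,\chi\rangle = 1$ immediately yields $\chi(1)^2 = [G:N]$, so every member of $\Irr(G|\theta)$ shares this common degree and distinct degrees forces $|\Irr(G|\theta)| = 1$, i.e., $\theta$ is fully ramified.

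For the general nonabelian case I would induct on $|G/N|$, setting $H = G'N$, which is a proper normal subgroup of $G$ with $H/N = (G/N)'$ of odd order strictly smaller than $|G/N|$. Since $\chi$ vanishes off $H$, Clifford's theorem gives $\chi|_H = e_\chi \sum_{\phi \in \mathcal{O}_\chi} \phi$ for a $G$-orbit $\mathcal{O}_\chi \subseteq \Irr(H|\theta)$, with $e_\chi^2 |\mathcal{O}_\chi| = [G:H]$ (from $\langle\chi|_H,\chi|_H\rangle_H = [G:H]$). For each orbit representative $\phi$ with inertia $T = I_G(\phi)$, the triple $(T,H,\phi)$ satisfies the inductive hypothesis: $T/H$ is odd with $|T/H| < |G/N|$, and $\Irr(T|\phi)$ inherits distinct degrees from $\Irr(G|\phi) \subseteq \Irr(G|\theta)$ via the induction-restriction bijection, so $\phi$ is fully ramified in $T/H$, giving $\chi(1) = \phi(1)\sqrt{[G:H][G:T]}$. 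The induction then closes once one shows there is exactly one $G$-orbit on $\Irr(H|\theta)$: granting this, $|\Irr(G|\theta)| = 1$, and the orthogonality $\sum_{\psi \in \Irr(H|\theta)}\psi(1)^2 = [H:N]$ (with all $\psi$ sharing the common degree $\phi(1)$ as $G$-conjugates) yields $[G:T]\phi(1)^2 = [H:N]$, whence $\chi(1)^2 = [G:H][G:T]\phi(1)^2 = [G:N]$, exactly the fully ramified condition.

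The principal obstacle is proving uniqueness of the $G$-orbit on $\Irr(H|\theta)$. The abelian quotient $G/H$ acts on $\Irr(H|\theta)$, with the distinct-degree hypothesis forcing the orbit sizes to be pairwise distinct odd divisors of $|G/H|$. To rule out more than one orbit, the oddness of $G/N$ is used essentially: identifying $\Irr(H|\theta)$ with $\widehat{Z/N} \cong \mathbb{F}_q^\ell$ via the abelian Clifford correspondence (for some odd prime $q$ and a ramification core $Z$ with $N \leq Z \leq H$), the negation $v \mapsto -v$ on $\widehat{Z/N}$ commutes with a linearization of the $G/H$-action (available when $q \nmid |G/H|$, via coprime cohomology vanishing) and fixes no nonzero vector (because $|G/H|$ is odd), so nontrivial orbits pair with same-size partners — impossible when orbit sizes are pairwise distinct, unless only the trivial orbit is present. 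The remaining case $q \mid |G/H|$, where the affine $G/H$-action on $\widehat{Z/N}$ need not linearize, is the technical crux; one expects to handle it by refining the choice of $H$ through the Fitting series of $G/N$, or by a Sylow-by-Sylow reduction using Isaacs' criterion that in solvable quotients, $\theta$ extends to $G$ iff it extends to each Sylow subgroup of $G/N$.
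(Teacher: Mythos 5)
Your reduction to the central case, the tensoring observation, and the abelian-quotient case are all correct, and your negation pairing on orbits is exactly the device the paper attributes to Dade (the orbits of $v$ and $-v$ are distinct of equal size when the acting group has odd order, which is incompatible with pairwise distinct orbit sizes). But the proposal has a genuine gap, and you have located it yourself: the case where the action on $\Irr(H|\theta)$ does not linearize, i.e.\ where no member of the relevant character set is $G$-invariant. You write that ``one expects to handle it by refining the choice of $H$ through the Fitting series\dots or by a Sylow-by-Sylow reduction,'' but this is precisely where the real work lies. In the paper's proof this is Case~2 (no extension of $\theta$ to a chief section $M/N$ is $G$-invariant), and disposing of it occupies most of the argument: one first shows $p$ divides $|N|$ (else the canonical extension of Corollary 6.28 of Isaacs would be $G$-invariant), then that $O_{p'}(G/N)=1$ by comparing the degree formula for $\chi_1$ across two chief sections of coprime order, then that every characteristic abelian subgroup of $O_p(G)$ is cyclic (via a delicate embedding of $C_G(L/N)/C_G(L)$ into $N^k$ and a Glauberman argument with a chief factor $R/S$), and finally one uses the resulting extraspecial-times-cyclic structure of $O_p(G)$ to manufacture a normal subgroup $M$ with $|M/N|=p$ admitting a $G$-invariant extension, returning to Case~1. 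None of this is present or replaceable by a one-line expectation.

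There is also a secondary flaw in the setup with $H=G'N$: you assert that the distinct-degree hypothesis forces the $G$-orbits on $\Irr(H|\theta)$ to have pairwise distinct sizes, but your own degree formula $\chi(1)=\phi(1)\sqrt{[G{:}H][G{:}T]}$ shows this only follows if the representatives $\phi$ of different orbits have equal degree, which need not hold since $H/N=(G/N)'$ can be nonabelian. The paper sidesteps this by working one chief section $M/N$ above $N$ at a time, so that the characters being permuted are all \emph{extensions} of $\theta$ (hence of constant degree $\theta(1)$), and distinct character degrees of $G$ really do translate into distinct orbit sizes. Your subsequent identification of $\Irr(H|\theta)$ with $\widehat{Z/N}\cong\mathbb{F}_q^{\ell}$ via an unspecified ``ramification core'' inherits the same difficulty. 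So the proposal is a reasonable sketch of the easy half of the theorem, but the theorem's content is concentrated in the part left open.
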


{\bf Acknowledgments}
I would like to thank E. C. Dade, 
for  his valuable remarks  that improved the original version of this note.
I would also like to thank M. Isaacs, for pointing out Corollary C and 
   Lemma 6.1 in \cite{is1}, and thus simplifying  my proofs.

\section{Pairs $(G, N)$ that satisfy property (D)}

Let $N \ne 1$ be a normal subgroup of the finite group $G$.
We say the $(G, N)$ is a Camina pair if it satisfies the following hypothesis

(F2)  If $x \in G \setminus N $, $x$ is  conjugate to $xy$ for all $y \in N$.

\noindent
Camina pairs were first introduced by Camina in \cite{cam}. For their definition Camina used in
 \cite{cam} a character-theoretic approach (see  hypothesis  (F1) in \cite{cam}), and  
proved that his  definition  is equivalent to hypothesis (F2) above.
In \cite{chima}, Proposition 3.1, Chillag  and MacDonald showed that
$(G, N)$ is a Camina pair if and only if 
for every $x \in G \setminus N$ we have $|C_G(x) | =
 |C_{G/N}(xN)|$. 
 (In their terminology a Camina pair $(G, N)$ is said to be a pair that has (F2).)

\begin{lemma}\label{l2.0}
Let $N \unlhd G$, where $N$ is a $p$-group and $G/N$ is solvable. 
Let $\theta \in \Irr(N)$ be $G$-invariant  and assume that the members of $\Irr(G| \theta)$ 
have distinct degrees.
Then $\theta$ is fully ramified in $G/N$  and $G$ 
is a $p$-group. 
\end{lemma}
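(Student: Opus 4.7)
The conclusion that $\theta$ is fully ramified in $G/N$ means there is a unique $\chi\in\Irr(G|\theta)$, with $\chi_N=e\theta$ and $e^2=[G:N]$. Since $\theta$ is $G$-invariant, every $\chi\in\Irr(G|\theta)$ already has $\chi_N=e_\chi\theta$ with $e_\chi=[\chi_N,\theta]=\chi(1)/\theta(1)$, so $\theta^G=\sum_{\chi\in\Irr(G|\theta)}e_\chi\chi$. Frobenius reciprocity then gives
\[
\sum_{\chi\in\Irr(G|\theta)}e_\chi^2 \;=\; [\theta^G,\theta^G] \;=\; [\theta,(\theta^G)_N] \;=\; [G:N].
\]
Consequently, "fully ramified" is equivalent to the single statement $|\Irr(G|\theta)|=1$: the unique $e_\chi$ is then automatically forced to be $\sqrt{[G:N]}$. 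Thus the content of the lemma reduces to proving $|\Irr(G|\theta)|=1$ together with $G$ being a $p$-group.

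These two conclusions are precisely Theorem~E of Berkovich, Isaacs, and Kazarin, quoted in the introduction from~\cite{bik}, stated under exactly the hypotheses of the lemma. So the most direct route is to invoke that theorem and combine it with the orthogonality computation above.

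A more self-contained approach, using only results internal to this paper, would proceed by first showing that $G/N$ is itself a $p$-group; after that, $G$ is a $p$-group, hence supersolvable, and Theorem~\ref{thm1} of the final section delivers the fully ramified conclusion immediately. To prove $G/N$ is a $p$-group, I would argue by contradiction: assume some prime $q\ne p$ divides $|G/N|$ and let $H/N$ be a Hall $p'$-subgroup of $G/N$, available by solvability. Since $\gcd(|N|,|H/N|)=1$, the character $\theta$ admits its canonical (Isaacs) extension $\hat\theta\in\Irr(H)$, which is $G$-invariant, and Gallagher's theorem gives the parametrization $\Irr(H|\theta)=\{\hat\theta\lambda:\lambda\in\Irr(H/N)\}$ with degrees $\theta(1)\lambda(1)$. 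Transferring this structure up to $G$ through the Clifford correspondence ought to produce two distinct characters in $\Irr(G|\theta)$ of the same degree. The instructive case $H=G$ is immediate: the characters $\hat\theta\lambda$ themselves belong to $\Irr(G|\theta)$, and if $H/N$ has prime order $q$ they all share the common degree $\theta(1)$, forcing $H=N$. The main obstacle is the general case $H<G$, where the Gallagher parametrization has to be intertwined with Clifford induction — this is precisely the bookkeeping addressed by Isaacs' Corollary~C and Lemma~6.1 of~\cite{is1} referenced in the acknowledgments, which should let one reduce (via a character triple isomorphism) to a projective-character problem over $H/N$ and there exhibit the required degree collision.
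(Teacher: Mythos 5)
Your primary argument --- invoking Theorem~E of Berkovich, Isaacs and Kazarin from \cite{bik}, whose hypotheses match the lemma exactly --- is precisely what the paper does, and your preliminary observation that ``fully ramified'' reduces to $|\Irr(G|\theta)|=1$ via $\sum_\chi e_\chi^2=[G:N]$ is correct. The alternative self-contained sketch you append is only a partial outline (you acknowledge the case $H<G$ is not carried out), but since your main route is the citation, the proof stands as essentially identical to the paper's.
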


\begin{proof}
Theorem E in \cite{bik}
\end{proof}

The following is part of Corollary 4.5 in \cite{bik}.

\begin{lemma}\label{l2.2}
Assume that  $(G, N)$  has property (D),    with $N$ a minimal
 normal $p$-subgroup of a nonabelian 
 solvable group $G$. Then every non principal character $\lambda$  of $N$ is fully ramified
 in  $G(\lambda)/N$. So for every such $\lambda$, the group $G(\lambda)$ is a $p$-group.
Hence the action of any  $p'$-subgroup  $Q$ of $G$ on $N$ is Frobenius.
In addition, if $\lambda, \mu $ are non principal linear characters of $N$ whose stabilizers
 have equal order in $G$
then they are $G$-conjugate.
\end{lemma}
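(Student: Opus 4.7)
The plan is to reduce property (D) from $G$ to the stabilizer $T=G(\lambda)$ via Clifford correspondence, and then to invoke Lemma~\ref{l2.0}; the remaining assertions follow by duality and a short degree count.

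Fix a non-principal $\lambda\in\Irr(N)$ and set $T=G(\lambda)$. Since $N$ is a minimal normal subgroup of the solvable group $G$, it is elementary abelian, so $\lambda$ is linear. By Clifford correspondence, induction gives a bijection $\Irr(T\mid\lambda)\to\Irr(G\mid\lambda)\subseteq\Irr(G\mid N)$ that multiplies degrees by the constant $|G:T|$. Property (D) says the degrees in $\Irr(G\mid N)$ are distinct, so the degrees in $\Irr(T\mid\lambda)$ are distinct as well. As $T/N$ is solvable and $\lambda$ is $T$-invariant, Lemma~\ref{l2.0} applies to the triple $(T,N,\lambda)$ and yields that $T$ is a $p$-group and that $\lambda$ is fully ramified in $T/N$.

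For the Frobenius action, let $Q\leq G$ be any $p'$-subgroup. Since $G(\lambda)$ is a $p$-group for every non-principal $\lambda\in\Irr(N)$, we have $Q\cap G(\lambda)=1$, so no element $q\in Q\setminus\{1\}$ fixes any non-principal character $\lambda\in\Irr(N)$. By Brauer's permutation lemma, applied to each cyclic subgroup $\langle q\rangle$, the fixed points of $q$ on $N$ and on $\Irr(N)$ have the same cardinality, which equals $1$ whenever $q\neq 1$ (since the action on $\Irr(N)$ has then only the principal character as fixed point). Therefore $C_N(q)=1$ for every $1\neq q\in Q$, and $Q$ acts Frobeniusly on $N$.

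Finally, full ramification of the linear character $\lambda$ in $T/N$ produces a unique $\psi\in\Irr(T\mid\lambda)$, with $\psi(1)=|T:N|^{1/2}$, so the unique $\chi_\lambda\in\Irr(G\mid\lambda)$ has degree $|G:T|\,|T:N|^{1/2}$. The analogous formula holds for $\mu$ with stabilizer $T'=G(\mu)$. If $|T|=|T'|$, the two degrees coincide, and property (D) forces $\chi_\lambda=\chi_\mu$. Both $\lambda$ and $\mu$ are then constituents of $(\chi_\lambda)_N$, so Clifford's theorem places them in a single $G$-orbit. I expect no real obstacle: all the hard work is packaged in Lemma~\ref{l2.0}, and the only mildly subtle point is the element-wise use of Brauer's permutation lemma, since the Frobenius property is a statement about each non-trivial $q\in Q$ individually rather than about the group action as a whole.
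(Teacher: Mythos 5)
Your proof is correct. Note, however, that the paper does not actually prove this lemma: it simply quotes it as part of Corollary 4.5 of \cite{bik}, so what you have written is a self-contained derivation of the cited result from Lemma \ref{l2.0} (itself Theorem E of \cite{bik}, which the paper also takes as given). Your reduction is the natural one and each step checks out: the Clifford correspondence $\Irr(G(\lambda)\mid\lambda)\to\Irr(G\mid\lambda)$ multiplies degrees by the constant $|G:G(\lambda)|$, and $\Irr(G\mid\lambda)\subseteq\Irr(G\mid N)$ because a non-principal constituent of $\chi_N$ prevents $N\leq\Ker(\chi)$; hence property (D) descends to $\Irr(G(\lambda)\mid\lambda)$ and Lemma \ref{l2.0} applies. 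The passage from ``$G(\lambda)$ is a $p$-group for all non-principal $\lambda$'' to the Frobenius action via the element-wise form of Brauer's permutation lemma (fixed characters versus fixed classes, with classes being elements since $N$ is abelian) is exactly right, and the final degree count $\chi_\lambda(1)=|G:G(\lambda)|\,|G(\lambda):N|^{1/2}$ combined with property (D) correctly forces $\chi_\lambda=\chi_\mu$ and hence $G$-conjugacy of $\lambda$ and $\mu$ by Clifford's theorem. In short, your argument supplies the proof the paper outsources, and it uses precisely the ingredient (Lemma \ref{l2.0}) that the paper already has on hand.
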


\begin{remark}\label{r2.1}
Observe that since every non principal character $\lambda$ of $N$  is fully ramified with
 respect to $G(\lambda)/N$, 
  there is only one irreducible character of $G$ lying above the $G$-conjugacy class  of
  $\lambda$.
\end{remark}

\begin{lemma}\label{l2.1}
 Assume that  $(G, N)$  has property (D),    with $N$ a minimal normal $p$-subgroup 
of a nonabelian 
 solvable group $G$. If  $N \leq Z(G)$ then  $N = Z(G)$ has order $2$, while  $G$ is a
 $2$-group
 of order $2^{2m+1}$ for some integer $m \geq 1$. Furthermore, $N$ is the unique minimal
 normal subgroup of $G$, 
 while $G$ affords a unique faithful irreducible character. Its degree is $2^m$. 
\end{lemma}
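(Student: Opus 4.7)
The plan is to combine three ingredients: the centrality of $N$, which makes every linear character of $N$ automatically $G$-invariant; the fully ramified conclusions of Lemmas \ref{l2.0} and \ref{l2.2}; and the distinct-degree hypothesis, which will force $p=2$.

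First I would observe that $N \leq Z(G)$ together with the minimality of $N$ forces $|N|=p$, since every subgroup of a central subgroup is normal in $G$. Then every $\lambda \in \Irr(N)$ is linear and $G$-invariant, so $G(\lambda)=G$, and Lemma \ref{l2.2} applies to give that each non-principal $\lambda$ is fully ramified in $G/N$; Lemma \ref{l2.0} then upgrades this to the statement that $G$ is itself a $p$-group. In particular $|G/N|$ is a perfect square and the unique character $\chi_\lambda \in \Irr(G|\lambda)$ has degree $\sqrt{|G/N|}$, a value independent of $\lambda$.

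Next I would invoke property (D). Because $N$ is central, the $p-1$ non-principal characters of $N$ lie in distinct $G$-orbits and so produce $p-1$ distinct members of $\Irr(G|N)$, all of the common degree $\sqrt{|G/N|}$. Distinctness of these degrees forces $p-1 \leq 1$, hence $p=2$ and $|N|=2$. Writing $|G/N|=2^{2m}$, one then has $|G|=2^{2m+1}$, with $m \geq 1$ because $G$ is nonabelian.

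Let $\lambda$ denote the unique nontrivial character of $N$ and set $\chi := \chi_\lambda \in \Irr(G|\lambda)$, so $\chi(1)=2^m$. The central step is to pin down $Z(G)=N$: since $Z(G) \leq Z(\chi)$ always, the standard inequality $\chi(1)^2 \leq |G:Z(\chi)|$ gives $|Z(\chi)| \leq |G|/\chi(1)^2 = |N|$, so $Z(\chi)=Z(G)=N$. Then $\ker\chi \leq Z(\chi)=N$ together with $\chi$ lying above $\lambda \neq 1_N$ forces $\ker\chi=1$, so $\chi$ is faithful. Uniqueness of $N$ as a minimal normal subgroup of $G$ is now automatic, because any minimal normal subgroup of a $p$-group meets $Z(G)=N$ nontrivially and hence equals $N$. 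Finally, any faithful irreducible character of $G$ cannot contain $N$ in its kernel, so it lies above $\lambda$; but $|\Irr(G|\lambda)|=1$ forces it to coincide with $\chi$. I do not anticipate any serious obstacle, since the entire statement is already essentially encoded in the fully ramified structure supplied by Lemmas \ref{l2.0} and \ref{l2.2}; the only technical ingredient is the inequality $\chi(1)^2 \leq |G:Z(\chi)|$, which delivers $Z(G)=N$ at once and then makes the rest routine.
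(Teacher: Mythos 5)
Your proof is correct, and its overall skeleton matches the paper's: centrality of $N$ makes every $\lambda\in\Irr(N)$ $G$-invariant, Lemmas \ref{l2.0} and \ref{l2.2} give full ramification and that $G$ is a $p$-group, each nonprincipal $\lambda$ yields a distinct member of $\Irr(G|N)$ of the common degree $|G:N|^{1/2}$, and property (D) then forces $|N|=2$. The one step you handle genuinely differently is $N=Z(G)$. The paper argues by extending $\lambda$ to a linear character $\lambda'$ of $Z(G)$, observing that the unique $\chi$ above $\lambda'$ must be fully ramified over $Z(G)$ as well, and comparing $\chi(1)=|G:N|^{1/2}$ with $\chi(1)=|G:Z(G)|^{1/2}$. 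You instead use the standard inequalities $Z(G)\leq Z(\chi)$ and $\chi(1)^2\leq |G:Z(\chi)|$, which with $\chi(1)^2=|G:N|$ pin down $N=Z(G)=Z(\chi)$ at once. Your route avoids re-running the fully-ramified argument over the larger central subgroup, and it has the added benefit that $\Ker(\chi)\leq Z(\chi)=N$ immediately yields faithfulness of $\chi$ and (via the $2$-group structure) the uniqueness of $N$ as minimal normal subgroup --- points the paper's proof leaves largely implicit. Both arguments are elementary and correct; note also that your preliminary reduction $|N|=p$ from centrality plus minimality is sound but not strictly needed, since the count of $|N|-1$ equal-degree characters already forces $|N|=2$ directly, as in the paper.
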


\begin{proof}
If  $N \leq Z(G)$ then every irreducible character of $N$ is $G$-invariant. Let $\lambda \in \Lin(N)$
 be a non principal linear character of $N$.
 If $\chi \in \Irr(G)$ is the unique irreducible character  that lies above 
$\lambda$, then $\chi(1) = |G:N|^{1/2}$. 
If $N $ is properly contained in $Z(G)$, then 
$\lambda$ extends to $Z(G)$. Let $\lambda' \in \Lin(Z(G))$ be such an extension. 
Then $\lambda'$ is also fully ramified in $G/Z(G)$, as $\chi$ is the 
unique character of $G$ above it,  and thus $\chi(1) = |G:Z(G)|^{1/2}$.
We conclude that $N = Z(G)$. 
Furthermore, because $(G, N)$ has property (D), we can only have one irreducible character 
of $G$ of degree $|G:N|^{1/2}$. Thus the order of $N$ is $2$. In addition $|G:N|= 2^{2m}$ for some $m \geq 1$.
Note also  that the irreducible character of $G$ that lies above the non principal character of $N$
is the only faithful character of $G$.
\end{proof}

We can now prove Theorem A.

\begin{proof}[Proof of Theorem A]
Assume that the pair   $(G, N)$ satisfies  property $(D)$, where  $G$ is a nonabelian group. So $G > N$.

Because every non principal linear  character $\lambda$ of $N$ is fully ramified with 
respect to $G(\lambda)/N$, every irreducible character of $G$ that lies above such a $\lambda$
 is not linear. Hence  every linear character of $G$ restricts trivially on $N$, and thus 
$N \leq G'$.  Note also that Lemma \ref{l2.2} implies that the action of any $p'$-subgroup of $G$ on  $\Irr(N)$ and thus on  $N$ is Frobenius.

\setcounter{claim}{0}
\begin{step}
$N$ is the unique minimal normal subgroup of $G$.
\end{step}

\begin{proof}
Assume not. Let $M$ be another minimal normal subgroup of $G$. 
Then $N $ centralizes $M$. 
Let $\lambda$ be a non trivial linear character of $N$. 
Then $M \times N$ is a subgroup of $G(\lambda)$.  The linear character 
 $\alpha= 1_M \times \lambda$ of $M \times N$ has the same stabilizer in $G$ as $\lambda$. 
Furthermore, there is only one irreducible character $\theta$ of 
 $G(\lambda) = G(\alpha)$ lying above 
$\lambda$, and thus above $\alpha$, since $\lambda$ is fully ramified. Thus $\alpha$ is also 
fully ramified with respect to $G(\lambda)/ (M\times N)$. This forces the degree of $\theta$ to
 equal $|G:NM | ^{1/2} = |G:N|^{1/2}$. Hence $M = 1$, and  the claim is proved.
\end{proof}

Because $N$ is the unique minimal normal subgroup of $G$, it is contained in the kernel of every
 non faithful irreducible character of $G$. Thus the set  
 $\Irr(G| N)$  consists of the  faithful irreducible characters of $G$, while every 
other irreducible character 
 of $G$ has $N$ in it kernel.

\begin{step}
$(G, N)$ is a Camina pair.
\end{step}

\begin{proof} 
Let $\chi \in \Irr(G |N)$ and assume that $\lambda \in \Lin(N)$ lies under $\chi$. 
If $\theta$ is any non-faithful character of $G$ then $\theta \chi$ 
is a character of $G$ whose restriction to $N$ equals $\theta(1) \cdot \chi_N$. 
Hence $\theta \chi$ lies above the $G$-conjugacy class of $\lambda$. 
In view of Lemma \ref{l2.2} only $\chi$ lies above the $G$-conjugacy class of $\lambda$.
So $\theta \chi = \theta(1) \chi$. 
Hence $\chi(g)= 0$  for all $g \in G \setminus \Ker(\theta)$. 
Since this is true for all $\theta$ with $N \leq \Ker(\theta)$, 
we have that $\chi $ vanishes off
$$
\cap\{ \Ker(\theta) | \theta \in \Irr(G), \, N \leq \Ker(\theta) \} = N 
$$
Hence every character of $ \Irr(G| N)$ vanishes off $N$.
Now assume that  $x \in  G \setminus N$. Then 
$$
|C_G(x)|  = \sum_{\{ \theta \in \Irr(G) | N \leq \Ker(\theta) \}} (|\theta(x)|^2) + 0 = |C_{G/N}(xN)|.  
$$
This proves the claim. 
\end{proof}

\begin{step}
If $G$ is a nilpotent group then it is of type (1).
\end{step}

\begin{proof}
Let $G$ be a nilpotent group. Then it is  a $p$-group, since 
the action of any $p'$-subgroup $Q$ of $G$  on $N$ is Frobenius.
 Because  $N$ is  minimal normal we have  $N \leq Z(G)$. Now Lemma \ref{l2.1} implies that $G$ 
is of type (1), and the claim follows.
\end{proof}

Because $G$ affords a faithful irreducible character its center $Z(G)$ is either cyclic
 or trivial. 
If $Z(G)$ is nontrivial then $N \leq Z(G)$ since $N$ is unique minimal 
subgroup of $G$. So   Lemma \ref{l2.1} implies that $G$ is of type (1). 
Hence $Z(G)= 1$. 

Assume  now that $G$ is  a Frobenius group with kernel $N$. 
So every non principal character of $N$ induces 
irreducibly to $G$. Since $(G, N)$ has property (D) we must 
only have one element in $\Irr(G |N)$. So if  $\chi \in \Irr(G | N)$ then $\chi$ lies above the 
$|N|- 1$ non principal linear characters of $N$. Since the degree of $\chi = \lambda^G$ equals  
$|G:N|$, we conclude that $|G:N| = |N|-1$. So $G$ is of type (2). 

Finally, if  $G$ is neither  a Frobenius group with kernel $N$ nor a nilpotent group, 
then Kuisch's theorem  (Theorem 1), implies that $G$ is of type (3).
This  completes the proof of the theorem.
\end{proof}

\begin{cor}\label{cor1}
If $G$ is of type (3) with $J = O^{p'}(G)$ being a $p$-Sylow subgroup of $G$, 
 then   $G = HP$ where $H$ is a $p'$-Hall  subgroup of $G$ of order $p^n -1$, while the action of $H$ 
on $N$ is Frobenius and it is   transitively on $N^{\#}$.
 In addition, $\Irr(G|N)$ consists of a unique irreducible character whose degree
 equals $(p^n-1) (|P|/ p^n)^{1/2}$.
\end{cor}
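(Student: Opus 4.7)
The plan is to exploit that, in case (i) of Kuisch's theorem, $J=P$ is a normal Sylow $p$-subgroup of $G$, so Schur--Zassenhaus yields a semidirect decomposition $G=P\rtimes H$ with $H$ a $p'$-Hall subgroup. Theorem~A already guarantees that $H$ acts Frobeniusly on $N$, so $|H|$ divides $p^n-1$; the content of the corollary is to sharpen this to $|H|=p^n-1$ and to identify the unique member of $\Irr(G|N)$.

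The crucial step is to establish $N\le Z(P)$. Since $[P,N]$ is a $G$-invariant subgroup of $N$ and $N$ is irreducible as a $G$-module, one has $[P,N]=1$ or $[P,N]=N$. To rule out the latter, I would argue by contradiction: assume $[P,N]=N$. By Lemma~\ref{l2.2} the stabilizer $G(\lambda)$ of every non-principal $\lambda\in\Lin(N)$ is a $p$-group, and since $P$ is the unique Sylow $p$-subgroup of $G$, $G(\lambda)=P(\lambda)$. Let $\lambda_1,\dots,\lambda_k$ represent the $G$-orbits on $\Lin(N)\setminus\{1\}$, set $|P|=p^s$ and $|P(\lambda_i)|=p^{a_i}$; by Lemma~\ref{l2.2} the $a_i$ are pairwise distinct. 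Full ramification gives the unique $\chi_i\in\Irr(G|\lambda_i)$ the degree $\chi_i(1)=|H|\,p^{\,s-(a_i+n)/2}$, and inserting this into $\sum_{\chi\in\Irr(G|N)}\chi(1)^2=|G|-|G/N|$ reduces to
\[
|H|\sum_{i=1}^{k} p^{\,s-a_i}\;=\;p^n-1.
\]
Under the assumption $[P,N]=N$, no non-principal $\lambda$ is $P$-invariant, so $a_i<s$ for every $i$; then the left-hand side is divisible by $p$, contradicting that the right-hand side is coprime to $p$. Hence $N\le Z(P)$.

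With $N\le Z(P)$, every non-principal $\lambda\in\Lin(N)$ satisfies $P(\lambda)=P$, and hence $G(\lambda)=P$; Lemma~\ref{l2.2}, via the uniqueness of stabilizer orders, then forces $\Lin(N)\setminus\{1\}$ to be a single $G$-orbit. Because $N$ is abelian and $P$ centralizes $N$, this is equivalent to $H$ acting transitively on $N^{\#}$, so $|H|=p^n-1$. Consequently $\Irr(G|N)$ consists of a single character $\chi$, induced from the unique character of $P$ lying above $\lambda$, of degree $\chi(1)=[G:P]\cdot[P:N]^{1/2}=(p^n-1)(|P|/p^n)^{1/2}$. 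The main obstacle is the dichotomy step: ruling out $[P,N]=N$ requires combining Lemma~\ref{l2.2} (itself coming from property~(D)) with the global degree-sum identity and extracting a congruence contradiction modulo $p$. Once $N\le Z(P)$ is in hand the rest is routine Clifford theory.
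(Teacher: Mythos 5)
Your proof is correct, and its overall skeleton --- show $N\le Z(P)$, deduce $G(\lambda)=P$ for every non-principal $\lambda\in\Lin(N)$, invoke the last assertion of Lemma \ref{l2.2} to get a single $G$-orbit on $\Lin^{\#}(N)$, then read off the degree via Remark \ref{r2.1} and Clifford theory --- is the same as the paper's. The genuine difference is how the pivotal fact $N\le Z(P)$ is obtained. The paper imports it from Chillag--MacDonald: Proposition 3.4 of \cite{chima} gives $Z(P)\le N$ for a Camina pair with normal Sylow $p$-subgroup, whence $Z(P)=N$ by minimality of $N$; it also quotes Lemma 4.3 of \cite{chima} to see that $HN$ is Frobenius. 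You instead derive $N\le Z(P)$ internally from property (D): the dichotomy $[P,N]\in\{1,N\}$ for the irreducible $G$-module $N$, the degrees $\chi_i(1)=|H|\,p^{\,s-(a_i+n)/2}$ coming from full ramification in $G(\lambda_i)/N$, and the identity $\sum_{\chi\in\Irr(G|N)}\chi(1)^2=|G|-|G/N|$ collapse to $|H|\sum_i p^{\,s-a_i}=p^n-1$, which is impossible modulo $p$ unless some non-principal $\lambda$ is $P$-invariant. Your route is self-contained (it uses only Lemma \ref{l2.2} and the Frobenius action from Theorem A, which then also forces $|H|=p^n-1$ once transitivity is known), at the cost of a counting computation; the paper's route is shorter and yields the slightly stronger equality $Z(P)=N$. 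Two small points you should make explicit: $G(\lambda)\le P$ holds because $G(\lambda)$ is a $p$-group and $P$ is the unique Sylow $p$-subgroup, and the passage from transitivity on $\Lin^{\#}(N)$ to transitivity on $N^{\#}$ uses Brauer's permutation lemma (or, more directly, $|H|=|G:G(\lambda)|=p^n-1=|N^{\#}|$ combined with the Frobenius action of $H$ on $N$).
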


\begin{proof}
Let $G$ be of type (3) and let  $J$ be a $p$-Sylow subgroup of $G$. 
(In view of the notation in  \cite{chima}, $(G, N)$ is said to have F2(p) with $N$ a $p$-group.) 
Let $J = P$ be the normal  $p$-Sylow subgroup of $G$, then  Proposition 3.4 in \cite{chima}, 
implies 
$Z(P) \leq N$. So $Z(P) = N$, because $N$ is minimal.
According to Lemma 4.3 in \cite{chima}, if $H$ is a $p'$-Hall subgroup of $G$, then 
  $HN$ is a Frobenius group.

Because $N = Z(P)$ every irreducible character of $N$ is $P$-invariant. Furthermore, as $P$ is a
$p$-Sylow subgroup of $G$, Lemma \ref{l2.2} implies that 
$G(\lambda) = P$ for every non principal linear character $\lambda \in \Lin(N)$.
Hence all the non principal linear characters of $N$ form a single $G$-orbit. In view of Remark \ref{r2.1}, 
we conclude that we only have one character $\chi  \in \Irr(G |N)$. 
Note that 
$$
\chi(1) = |G : P| |P:N|^{1/2}= |H| |P:N|^{1/2}. 
$$
In addition, because $G = H P$ and $P$ centralizes $N$, the single  $G$-orbit of $\Lin^{\#}(N)$ is a 
single $H$-orbit.  Since $HN$ is a Frobenius group we conclude that we only have one irreducible character
  $\theta \in  \Irr(HN |N)$. Furthermore, $\theta(1) = |H|= p^n -1$. 
\end{proof}

\section{Irreducible characters of distinct degrees over an invariant character}
We start with the following known result (see Lemma 12.5 in \cite{wolf})
\begin{lemma}\label{l2}
Let $(G, N, \theta)$ be a character triple.  Assume further that $\theta$ is fully 
ramified in $G/N$ while $G/N$ is an abelian group. 
Then $G/N= C_1 \times C_2$ where $C_1 \cong C_2$.
\end{lemma}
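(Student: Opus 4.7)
The plan is to exhibit $G/N$ as a finite abelian group carrying a non-degenerate alternating pairing, and then invoke the elementary symplectic decomposition to split it into two isomorphic direct factors.

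First I would invoke standard Clifford/character-triple machinery: since $\theta$ is $G$-invariant, choose a projective representation $\mathcal{P} \colon G \to GL_{\theta(1)}(\mathbb{C})$ with $\mathcal{P}|_N$ ordinary and affording $\theta$, normalized so that its factor set $\alpha$ is inflated from a cocycle on $G/N \times G/N$. Because $G/N$ is abelian, every commutator $[x,y]$ lies in $N$, and one can define the commutator pairing
\[
\omega(xN, yN) := \alpha(x,y)\,\alpha(y,x)^{-1},
\]
equivalently characterized by $\mathcal{P}(x)\mathcal{P}(y) = \omega(xN,yN)\,\mathcal{P}(y)\mathcal{P}(x)$. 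A routine check shows that $\omega$ is a well-defined $\mathbb{Z}$-bilinear alternating pairing $G/N \times G/N \to \mathbb{C}^\times$, independent of the particular choice of factor-set representative within its cohomology class.

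Second, I would show that $\omega$ is non-degenerate, and that this is in fact equivalent to full ramification of $\theta$. Via the standard projective-character correspondence, $\Irr(G|\theta)$ is in bijection with the irreducible projective characters of $G/N$ with factor set $\alpha^{-1}$; and since $G/N$ is abelian, the $\alpha$-regular elements are exactly the radical $R := \{gN : \omega(gN,\,-\,) \equiv 1\}$, with each irreducible projective character of degree $|G/N : R|^{1/2}$. The fully ramified hypothesis $|\Irr(G|\theta)| = 1$ then forces $R = 1$, so $\omega$ is non-degenerate. (As a sanity check, the unique degree then equals $|G/N|^{1/2}$, consistent with full ramification.)

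Finally, I would conclude via the structural fact: a finite abelian group $A$ equipped with a non-degenerate alternating pairing $A \times A \to \mathbb{C}^\times$ admits a symplectic basis, and hence decomposes as $A = C_1 \oplus C_2$ with $C_1 \cong C_2$. The argument is the usual hyperbolic peeling: pick $a \in A$ of maximal order, use non-degeneracy to produce $b$ with $\omega(a,b)$ of order exactly $|a|$, verify that $\langle a\rangle \cap \langle b\rangle = 1$ and that $\omega$ restricts non-degenerately to the orthogonal complement of $\langle a, b\rangle$, then induct on $|A|$. Collecting one generator from each hyperbolic pair into $C_1$ and its partner into $C_2$ delivers the required isomorphism $C_1 \cong C_2$ with $G/N = C_1 \times C_2$.

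The main obstacle is the second step: correctly identifying the $\alpha$-regular classes with the radical of $\omega$ in the abelian setting, and matching the count and degrees of projective characters with the full-ramification hypothesis. The construction of the pairing is formal and the final symplectic decomposition is elementary linear algebra over cyclic groups, so it is precisely at the projective-character bookkeeping that care is required.
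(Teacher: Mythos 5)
Your proof is correct. The paper itself gives no argument for this lemma --- it simply quotes it as Lemma 12.5 of Manz and Wolf, \emph{Representations of Solvable Groups} --- and your argument is essentially the standard one underlying that reference: pass to a projective representation with factor set inflated from $G/N$, observe that the commutator pairing $\omega(xN,yN)=\alpha(x,y)\alpha(y,x)^{-1}$ is a well-defined alternating bicharacter on the abelian group $G/N$ whose radical consists exactly of the $\alpha$-regular elements, deduce non-degeneracy from $|\Irr(G|\theta)|=1$, and finish with the hyperbolic (symplectic-basis) decomposition of a finite abelian group carrying a non-degenerate alternating form. The one place deserving explicit care is the point you already flag: the identification of $\omega$ with the scalar in $\mathcal{P}(x)\mathcal{P}(y)=\omega(xN,yN)\,\mathcal{P}(y)\mathcal{P}(x)$ uses the normalization that $\alpha$ is constant on cosets of $N$ and trivial against $N$, so that $\mathcal{P}(yx)=\mathcal{P}(xy)\mathcal{P}([y,x])$ contributes no extra non-scalar factor; with that normalization in place the bookkeeping goes through exactly as you describe.
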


We first  prove the supersolvable case. 

\begin{proof} [Proof of Theorem \ref{thm1}]
Work using induction on $|G|$ and then on  $|G:N|$.
Let $s$  be the number of irreducible characters of $G$ lying above $\theta$.
(So $s  > 1$.)

Repeated applications of Clifford's theory along with the inductive hypothesis implies that 
  $N$ is a cyclic central subgroup of $G$ while  $\theta$ is
 a linear faithful character.

The main step of the proof is the following
\begin{claim} \label{cl1}  $s=2$, and  if $\{ \chi_1, \chi_2 \} = \Irr(G |\theta)$ 
then $\chi_1(1) = (|G:N|/p)^{1/2}$ and $\chi_2(1) = (p-1)^{1/2}  \cdot (|G:N|/p)^{1/2}$, 
where $p$ is a prime divisor of a chief section $M/N$ of $G$.
\end{claim}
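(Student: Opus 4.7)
The plan is to exploit a chief factor of prime order in $G/N$. Supersolvability of the nontrivial quotient $G/N$ yields a chief factor $M/N$ of prime order $p$. Because $M/N$ is cyclic and $\theta$ is $M$-invariant, Gallagher produces $p$ linear extensions $\varphi_1,\dots,\varphi_p$ of $\theta$ to $M$, which $G$ permutes. Clifford's theorem then gives
$$
\Irr(G\mid\theta)=\bigsqcup_{\Omega}\Irr(G\mid\Omega),
$$
where $\Omega$ ranges over the $G$-orbits on these extensions, and the Clifford correspondence $\mu\mapsto\mu^G$ identifies $\Irr(G\mid\Omega)$ with $\Irr(G(\varphi)\mid\varphi)$ for any $\varphi\in\Omega$.

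Next, I would apply the inductive hypothesis to each triple $(G(\varphi),M,\varphi)$. Its hypotheses are met: $G(\varphi)/M$ is supersolvable (as a quotient of a subgroup of $G/N$), $\varphi$ is $G(\varphi)$-invariant, and $\Irr(G(\varphi)\mid\varphi)$ has distinct degrees because $\mu\mapsto\mu^G$ multiplies all degrees by the constant $[G:G(\varphi)]$ and embeds into the distinct-degree set $\Irr(G\mid\theta)$. Since either $|G(\varphi)|<|G|$ (when $\varphi$ is not $G$-invariant) or $|G(\varphi):M|<|G:N|$ (when $\varphi$ is $G$-invariant), induction yields that $\varphi$ is fully ramified in $G(\varphi)/M$, so $|\Irr(G(\varphi)\mid\varphi)|=1$ and its unique element has degree $\sqrt{|G(\varphi):M|}$. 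Consequently each orbit $\Omega$ of length $n$ contributes a single $\chi\in\Irr(G\mid\theta)$ of degree $\sqrt{n\cdot|G:M|}$, so $s$ equals the number of $G$-orbits and the orbit lengths must be pairwise distinct.

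The last step is the orbit classification. The $G$-action on the $p$ extensions is affine (Gallagher plus the change-of-base-point cocycle $g\mapsto\lambda_g$), giving a homomorphism $G\to\mathrm{AGL}(1,p)=C_p\rtimes C_{p-1}$ with image $K$; since $C_p$ is simple, $T:=K\cap C_p$ is either $1$ or $C_p$. If $T=C_p$ the action is transitive of length $p$ and $s=1$, contradicting $s>1$. Hence $T=1$, and by Schur--Zassenhaus $K$ is conjugate to its projection $L$ into the point-stabilizer $C_{p-1}$, so the orbits are one fixed extension together with $(p-1)/|L|$ further orbits of common length $|L|$. All the latter orbits produce characters of the same degree $\sqrt{|L|\cdot|G:M|}$, so distinctness forces $(p-1)/|L|=1$, i.e.\ $|L|=p-1$. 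Thus $r=s=2$, and the two degrees are $\chi_1(1)=\sqrt{|G:M|}=(|G:N|/p)^{1/2}$ (from the fixed extension) and $\chi_2(1)=\sqrt{(p-1)|G:M|}=(p-1)^{1/2}(|G:N|/p)^{1/2}$ (from the orbit of length $p-1$), as claimed.

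The main obstacle is this orbit classification: one has to unpack Gallagher carefully to see that $G$ truly acts on the extensions through $\mathrm{AGL}(1,p)$, and then use the simplicity of $C_p$ together with the distinct-degree constraint to rule out every orbit pattern except the $(1,p-1)$ split. A secondary, easier point is checking that the induction is legitimately available to $(G(\varphi),M,\varphi)$ in both the invariant and non-invariant cases.
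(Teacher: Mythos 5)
Your proof is correct and follows the same skeleton as the paper's: pass to the $p$ extensions of $\theta$ to a chief factor $M/N$ of order $p$, use the Clifford correspondence together with the inductive hypothesis to show that each $G$-orbit of extensions of length $n$ carries exactly one character of $G$, of degree $\sqrt{n\,|G:M|}$, so that $s$ is the number of orbits and the orbit lengths must be pairwise distinct; then force the orbit pattern to be one fixed extension plus a single orbit of length $p-1$. The only genuine divergence is in how that last step is carried out. The paper produces a $G$-invariant extension by applying Glauberman's lemma to a $p'$-Hall subgroup of $G/N$ acting on $\Irr(M\mid\theta)$, and then uses Gallagher to identify the stabilizers of the remaining extensions with $C_G(M/N)$, so that the nontrivial orbits all have size $|G:C_G(M/N)|$, which divides $p-1$. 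You instead observe that the action on the extensions factors through $\mathrm{AGL}(1,p)$, rule out a nontrivial translation part (which would force transitivity and $s=1$), and use Schur--Zassenhaus to conjugate the image into the point stabilizer $C_{p-1}$, which acts semiregularly off its fixed point. The two arguments carry identical content --- a non-transitive subgroup of $\mathrm{AGL}(1,p)$ fixes a point and has equal orbit lengths elsewhere --- but yours is a little more self-contained in that it avoids Glauberman's lemma; note also that your degenerate case $|L|=1$ (all extensions invariant) is correctly excluded by the same distinct-orbit-size constraint, and that the induction in the $G$-invariant case is licensed by the secondary induction on $|G:N|$, exactly as in the paper.
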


\begin{proof}
Let $M/N$ be a chief section of $G$ with $|M:N| = p$.
Let $\phi_1, \phi_2, \dots, \phi_p$ be all the distinct extensions of $\theta$ 
to $M$. If $G(\phi_i)$ is the stabilizer of such an extension to $G$, then Clifford's 
theorem implies that induction defines a bijection between 
$\Irr(G| \phi_i)$ and $\Irr(G(\phi_i) | \phi_i)$. Because every character in
 $\Irr(G |\phi_i)$  lies above $\theta$, we conclude that all
 the irreducible characters
of $G(\phi_i)$ lying above $\phi_i$ have distinct degrees. Hence the inductive
 hypothesis implies that there exists only one character of $G(\phi_i)$ lying
 above $\phi_i$. We conclude that there exists only one irreducible character of $G$
above every $G$-orbit of $\phi_i$. 
Thus $s=|\Irr(G| \theta)|$ equals the number of orbits of the action of $G$ on 
the set or irreducible characters $\phi_1, \dots, \phi_p$. 
Let $\phi_1, \phi_2, \dots, \phi_s$ be a representative for each one of these 
orbits. Of course if the index of $G(\phi_i)$, for some $i=1, \dots, s$, 
  in $G$ is $p$ then we only have one
 orbit and the theorem holds. So we assume that $|G :G(\phi_i )| \ne p$ for all
 $i=1, \dots, s$. Let $\psi_i$ for $i=1, \dots, s$, be the unique irreducible
 character of $G(\phi_i)$ that lies  above $\phi_i$ and induces irreducibly to $G$.
Furthermore, let $\chi_i = \psi_i^G$, for all $i=1, \dots, s$. So
 $\{ \chi_i \}_{i=1}^s$ 
are all the irreducible characters of $G$ lying above $\theta$.

Note that the inductive hypothesis also implies that 
$\psi_i(1) = |G(\phi_i):M|^{1/2} \cdot \phi_i(1)$, since $\phi_i$ is fully ramified
 in $G(\phi_i)/ M$, for all $i=1, \dots, s$. 
Because  $\phi_i$ induces $\chi_i$ to $G$ we get   
$$
\chi_i(1)=|G:G(\phi_i)| \cdot  |G(\phi_i):M|^{1/2} \cdot \phi_i(1), 
$$
for all $i=1, \dots, s$. 
But $\chi_i(1)$ are all distinct, so we conclude that $|G:G(\phi_i)|$ 
are also distinct for all $i=1, \dots, s$.

Let $H/N$ be a $p'$-Hall subgroup of $G/N$. Then $H/N$ acts on $\Irr(M/N)$ 
as it acts on $M/N$. It also acts on the set $\Irr(M| \theta)$. In addition, the 
group $\Irr(M/ N)$ of linear
 characters acts transitively on $\Irr(M |\theta)$ by multiplication.
Hence Glauberman's lemma implies that there exists an irreducible  character in the set 
$\Irr(M | \theta)$ that is $H/N$-invariant. Let $\phi_1$ be such.
So  the index of $G(\phi_1)$ in $G$ is a power of $p$. Thus is it either $1$ or $p$. 
It can't be $p$ or else all the $\phi_i$, for $i=1, \dots, p$, form a single $G$-orbit. 
Hence $\phi_1$ is $G$-invariant. As we have a unique irreducible character $\chi_1$ in 
$G= G(\phi_1)$ above $\phi_1$, we conclude that 
$$
\chi_1(1) = |G:M|^{1/2} =(|G:N|/p)^{1/2} 
$$ 

According to Gallagher's theorem  (Corollary 6.17 in \cite{is}), distinct linear characters
$\lambda \in \Irr(M/N)$ provide distinct irreducible characters $\lambda \cdot \phi_1$ 
of $M$ lying above $\theta$. So $G(\phi_1) \cap G(\lambda \phi_1) \leq G(\lambda)$, for all 
such $\lambda$. On the other hand $G(\lambda) \leq G(\phi_1) \cap G(\lambda \phi_1)$, 
because $G(\phi_1) = G$. 
We conclude that $\cap_{i=1}^{p}G(\phi_i) = \cap_{i=1}^{p}G(\lambda_i)$.
The later group equals $C_G(M/N)$  and thus 
\begin{equation}\label{e.1}
\cap_{i=1}^{p}G(\phi_i) = C_G(M/N)=C.
\end{equation}

The group $G/C$ acts faithfully and irreducibly  on the $p$-group $M/N$.
So $G/C$ is a subgroup of $\Aut(M/N)$, and thus $G/C$ is a cyclic group whose 
order is a divisor of $p-1$.
In addition, $G/C$  acts faithfully and irreducibly  on $V=\Irr(M/N)$. 
Because $G/C$ is cyclic  every non trivial orbit in $V=\Irr(M/N)$ has size $|G/C|$.
It is easy to see that there is a one--to--one correspondence between the 
 $G$-orbits of $V$ and the $G$-orbits of all the  extensions $\phi_1,
\lambda_1 \phi_1, \cdots ,\lambda_{p-1} \phi_1$ of $\theta$ to $M$, 
where $\lambda_0 = 1$ corresponds to $\phi_1$ and 
the size of two corresponding orbits is the same.
Hence all the  $G$-orbits of $\phi_i$, for $i=2, \dots, p$,  have size $|G/C|$.
As these orbit sizes are distinct we conclude that $\phi_2, \dots, \phi_{p-1}$
form a single $G$-orbit, while $C = G(\phi_i)$, for $i=2, \dots, p$ and $|G:C|= p-1$.

So $s=2$ and we have only one irreducible 
character $\chi_2 \in \Irr(G)$ lying above $\phi_2, \dots, \phi_p$, 
and induced from $\psi_2 \in \Irr(C)$. As $\phi_2$ is fully ramified in $C/M$  
we conclude that 
$$
\chi_2(1) = (p-1) \cdot |C:M|^{1/2}=(p-1)^{1/2} \cdot (|G:N|/p)^{1/2} 
$$
Hence Claim \ref{cl1} follows.
\end{proof}

According to Corollary C in \cite{is1}, if we only have two irreducible characters $\chi_1, \chi_2$ 
of $G$ lying above an irreducible character of a normal subgroup $N$ of $G$ 
then $\chi_1(1)= \chi_2(1)$. This contradicts Claim \ref{cl1}, and thus the theorem follows.
\end{proof}

The following is Lemma 2.1 in \cite{esp}
\begin{lemma} \label{esp}
Let $G$ be a finite group of odd order having a faithful and irreducible quasiprimitive module $V$ over a finite field $\F$ of odd characteristic. If $F(G)$ in noncyclic , then $V$ contains 
at least two regular $G$-orbits.  
\end{lemma}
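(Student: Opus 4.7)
My plan is to combine a structural analysis of $F(G)$ forced by quasiprimitivity with an orbit-counting argument that bounds the number of vectors with nontrivial stabilizer. Since $V$ is quasiprimitive, Clifford's theorem gives that $V|_{F(G)}$ is homogeneous. Any abelian subgroup acting faithfully and homogeneously on $V$ acts by scalars on an irreducible $F(G)$-summand and therefore embeds in the multiplicative group of a finite extension of $\F$, which is cyclic; hence the hypothesis that $F(G)$ is noncyclic forces $F(G)$ to be \emph{nonabelian}. Moreover, a normal $p$-subgroup of $G$ (with $p = \operatorname{char}\F$) acts trivially on any semisimple module in characteristic $p$, so faithfulness gives $O_p(G)=1$ and $F(G)$ is a $p'$-group. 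By the standard structure theorem for nilpotent groups acting faithfully and homogeneously in coprime characteristic, I then write $F(G) = E \ast Z$ with $Z = Z(F(G))$ cyclic, $E$ a nontrivial central product of extraspecial $q$-groups over odd primes $q \ne p$ (odd because $|G|$ is odd), and $C_G(F(G)) = Z$ (the standard self-centralizing Fitting fact for faithful modules over solvable groups).

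Two regular $G$-orbits on $V$ exist precisely when
\[
\Bigl| \bigcup_{1 \ne g \in G} C_V(g) \Bigr| \le |V| - 2|G|.
\]
I would restrict the union to elements of prime order and eliminate $Z^{\#}$ immediately: a nontrivial $z \in Z$ acts on $V$ by a nontrivial scalar, so $C_V(z) = 0$. For $g$ of odd prime order $q$ inside $F(G) \setminus Z$, the central-product/extraspecial structure together with the coprime (to $\operatorname{char}\F$) action of $\langle g \rangle$ on $V$ gives a bound of the form $|C_V(g)| \le |V|^{1/q}$, coming from the fact that the eigenspaces of $g$ on $V$ (or on a scalar extension) partition $V$ into $q$ pieces of equal dimension. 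For $g$ of prime order in $G \setminus F(G)$, I would argue by induction on $|G|$ applied to the action of $C_G(g)$ on $C_V(g)$, or invoke standard regular-orbit theorems for coprime linear groups; here the odd-order hypothesis is essential, as it rules out involutions with large fixed subspaces and keeps Glauberman's correspondence available as a reduction tool.

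The main obstacle is controlling $\sum_g |C_V(g)|$ for $g$ of prime order in $G \setminus F(G)$, since such elements can be numerous and each centralizer can still be fairly large. For this I would exploit that $G/Z$ acts faithfully on $F(G)/Z(F(G))$ and preserves the commutator form on each $q$-piece, so $G/Z$ embeds into a product of symplectic groups over fields of odd order; this yields a polynomial-size bound for $|G|$ in terms of $\dim(E/Z(E))$, which I would then play off against the lower bound $|V| \ge |\F|^{\dim(E/Z(E))/2}$ coming from the minimum faithful degree of $E$. With both the odd-order and odd-characteristic hypotheses in force, these estimates balance out to give the required inequality, and hence at least two regular $G$-orbits.
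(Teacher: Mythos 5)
The paper does not actually prove this lemma: it is quoted verbatim as Lemma 2.1 of \cite{esp}, so your proposal must stand on its own as a proof, and as such it has genuine gaps. Your structural reductions are fine (quasiprimitivity forces $O_p(G)=1$ for $p=\operatorname{char}\F$, every normal abelian subgroup of $G$ is cyclic and central in $F(G)$, hence $F(G)=E\ast Z$ with $E$ of extraspecial type and $Z=Z(F(G))=C_G(F(G))$ cyclic), and the bound $\dim C_V(g)\le \dim V/q$ for $g\in F(G)\setminus Z$ of order $q$ is correct, since $[E,g]=Z(E)$ makes $g$ conjugate to $gz$ for every $z\in Z(E)$ while $Z(E)$ acts by primitive $q$-th roots of unity, so the eigenspaces of $g$ in each $Z(E)$-coset of eigenvalues are equidimensional. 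The problems are concentrated in the second half, which is where all such regular-orbit theorems are actually hard.

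First, you give no bound at all on $|C_V(g)|$ for $g$ of prime order outside $F(G)$, and in particular for elements of order $p$: only $O_p(G)=1$ is known, $p$ may divide $|G|$, so coprimality and eigenspace decompositions are unavailable for such $g$, and a unipotent element can have $C_V(g)$ of codimension $1$, which destroys any count of the form $\sum_g|C_V(g)|<|V|-2|G|$ no matter how small $|G|$ is. The suggested fallback, induction applied to $C_G(g)$ acting on $C_V(g)$, does not work as stated: $C_V(g)$ need not be irreducible, faithful or quasiprimitive for $C_G(g)$, and a regular $C_G(g)$-orbit on $C_V(g)$ is not a regular $G$-orbit on $V$. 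Second, the claim that $G/Z$ acts faithfully on $F(G)/Z(F(G))$ is false: $F(G)$ has nilpotence class at most $2$, so all of $F(G)$ lies in the kernel of that action; the correct statement is that $G/F(G)$ (more precisely $G/C_G(E/Z(E))$) embeds into a product of symplectic groups. This weakens your bound on $|G|$ by the factor $|F(G)|=e^2|Z|$, where $e^2=|E/Z(E)|$, and $|Z|$ is controlled only by the multiplicative group of a field extension of $\F$, i.e.\ it can be comparable to a root of $|V|$, so it cannot simply be absorbed. Finally, ``these estimates balance out'' is asserted rather than proved, and this is exactly the step where small configurations (small $e$, small $|\F|$, large $|Z|$) must be treated separately --- compare Theorem 3.4 of \cite{bik} quoted at the end of Section 3, where the analogous count for $2$-groups genuinely fails when $|V|=81$. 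Without the explicit inequalities and the analysis of those boundary cases, what you have is a plausible strategy (indeed, close in spirit to Espuelas's actual argument) but not yet a proof.
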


Using the above lemma we can prove 

\begin{lemma}\label{orb}
Assume that $G$ is a finite group of odd order, that acts irreducibly  on a
nontrivial $\F$-vector space $V$, where $\F$ is a field of odd  characteristic.
  Assume further that the orbits of $G$ on $V$ 
have distinct sizes. Then  $G$  acts transitively on $V - \{ 0 \}= V^{*}$.
\end{lemma}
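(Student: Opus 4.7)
The plan is to prove Lemma \ref{orb} by induction on $|G|$, using that $G$ is solvable (by Feit--Thompson, since $|G|$ is odd) and appealing to Lemma \ref{esp} once we have reduced to a quasiprimitive action. We may assume $V$ is a faithful $G$-module (otherwise we replace $G$ by $G/\Ker$; the orbits are unchanged). Argue by contradiction: suppose at least two nonzero $G$-orbits exist on $V$, all of pairwise distinct sizes.

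First I would reduce to the quasiprimitive case. If $V$ is not a quasiprimitive $G$-module, there is a nontrivial decomposition $V = W_1 \oplus \cdots \oplus W_t$ into the homogeneous components of some normal subgroup of $G$, with $G$ permuting the $W_i$ transitively and $t \geq 2$. Since $|G|$ is odd, $t$ is odd, hence $t \geq 3$. Let $H$ be the stabilizer in $G$ of $W_1$, so $[G:H] = t < |G|$ and $H$ acts irreducibly on $W_1$. For $v \in W_1 \setminus \{0\}$ the $G$-orbit of $v$ splits into $t$ pieces across the components, so $|G\cdot v| = t \cdot |H \cdot v|$; thus distinctness of $G$-orbit sizes on $W_1^*$ forces distinctness of $H$-orbit sizes on $W_1^*$. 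By the inductive hypothesis applied to $H$ acting on $W_1$, the group $H$ is transitive on $W_1^*$. A contradiction should then arise by examining vectors with support in exactly two of the components: for $w_i \in W_i^* $ and $w_j \in W_j^*$ with $i \ne j$, the $G$-orbit size of $w_i + w_j$ can be computed from the componentwise stabilizers, and the transitivity of $H$ on $W_1^*$ should yield two $G$-orbits of equal size.

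In the quasiprimitive case, Lemma \ref{esp} applies directly. If $F(G)$ were noncyclic, $V$ would contain at least two regular $G$-orbits, necessarily of the common size $|G|$, contradicting distinctness. Hence $F(G)$ is cyclic. Since $G$ is solvable we have $C_G(F(G)) = F(G)$, so $G/F(G) \hookrightarrow \Aut(F(G))$, which is abelian; thus $G$ is metabelian with cyclic Fitting subgroup. Set $\E = \operatorname{End}_{\F F(G)}(V)$; homogeneity of $V|_{F(G)}$ ensures that $\E$ is a field extension of $\F$ and that $F(G)$ embeds in $\E^\times$ acting on $V$ by scalar multiplication. Every nonzero $F(G)$-orbit on $V$ then has size exactly $|F(G)|$. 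Transferring the distinctness hypothesis to the abelian quotient $G/F(G)$ acting on $V^*/F(G)$, and using that orbits of an abelian group have constant point stabilizers, one concludes that there is only one nonzero $G$-orbit on $V$.

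The main obstacle is the imprimitive reduction: producing two $G$-orbits of equal size among vectors with support in two of the $W_i$ requires a careful comparison of stabilizers in $G \cap \mathrm{Stab}(W_1) \cap \mathrm{Stab}(W_2)$ as one coordinate ranges over an $H$-orbit of $W_1^*$. The subsequent quasiprimitive step is a one-line appeal to Lemma \ref{esp}, and the metabelian analysis in the final step is routine linear algebra over $\E$ once $F(G)$ is pinned down to be cyclic.
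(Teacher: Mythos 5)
Your overall strategy --- induct on $|G|$, reduce to a (quasi)primitive module, invoke Lemma \ref{esp} to force $F(G)$ cyclic, then finish by hand --- is essentially the paper's, but both places where you defer the real work are genuine gaps, and the paper closes them using a reduction you have omitted.

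First, the imprimitive step. You correctly deduce by induction that $H$ is transitive on $W_1^*$, but you leave open how to extract two equal-sized $G$-orbits from vectors supported on two components, and this is not a routine stabilizer computation: such orbits are governed by the action of the two-point stabilizer $H\cap H^{g}$ on $W_1^*\times W_2^*$, about which transitivity of $H$ on $W_1^*$ says essentially nothing. The paper proceeds differently: each $I$-orbit $O$ on the component $U_1$ sweeps out a single $G$-orbit $\bigcup_i O g_i$ of size $n|O|$ inside $\bigcup_i U_i$, so distinctness of $G$-orbit sizes passes down to $I$ acting on $U_1$, and the inductive conclusion (transitivity of $I$ on $U_1^*$) is then played off against the minimal-counterexample assumption. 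If you want a self-contained patch, the cleanest is the observation recorded after the lemma and attributed to Dade: since $|G|$ and the characteristic are both odd, $v$ and $-v$ always lie in \emph{distinct} orbits of equal size, which kills any nontransitive configuration at once.

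Second, the endgame. After Lemma \ref{esp} you only reach ``$G$ is metabelian with cyclic Fitting subgroup,'' and your concluding deduction does not follow: constancy of point stabilizers along each orbit of the abelian group $G/F(G)$ does not force distinct orbits to have equal size (an abelian group can perfectly well have orbits with different stabilizers), so ``one concludes that there is only one nonzero $G$-orbit'' is unjustified. The paper avoids this by first extending scalars to a splitting field $\E$ and using the Galois-conjugate decomposition $V^{\E}=W_1\oplus\cdots\oplus W_k$ to force $k=1$, i.e.\ $V$ absolutely irreducible. In a faithful, primitive, absolutely irreducible module every abelian normal subgroup is cyclic and \emph{central}, so cyclic $F(G)$ together with $C_G(F(G))\leq F(G)$ yields $G=F(G)$ cyclic, hence $\dim_{\F}V=1$, and transitivity on $V^*$ becomes an explicit computation with roots of unity. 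Without the absolute-irreducibility reduction your quasiprimitive case does not close; you should either add that reduction or replace the endgame by the $\pm v$ argument above.
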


\begin{proof}
Work using induction on $|G|$ and on $\dim_{\F}V$.
Let $G, V$ be a smallest counterexample.
We can assume that $G$ acts faithfully on $V$.

Let $\E$ be a splitting field for $G$ with $\F \leq \E$.
Write $V^{\E}= V \times_{\F} \E$. Then  $V^{\E}= W_1 \oplus \cdots \oplus W_k$, 
where $W_i$ are non--isomorphic irreducible $\E G$-modules, conjugate under the Galois group 
of the field extension $\E :\F$.
Furthermore, the orbits of $G$ on each $W_i$ 
have distinct sizes since these are some of the orbits of $G$ on $V$. 
Hence the inductive hypothesis implies that $G$ 
acts transitively on $W_i^{*}$, for all $i=1, \dots, k$. 
If $k\geq 2$ then we would get exactly  $k$ orbits of $G$ on $V$ of size $W_1^{*}$, which 
contradicts the hypothesis of the lemma.we are done by similar arguments to those used for 
the supersolvable case, 
 Thus we may assume that $k=1$ and thus $V$ is an absolutely irreducible 
$\F(G)$-module.

Let $N$ be a normal subgroup of $G$. Then Clifford's theorem implies that 
$V_N= U_1\oplus U_2 \oplus \cdots \oplus U_n$, where $U_i$ are the homogeneous
 components of $V_N$. Let $I$ be the inertia subgroup  of $U_1$ in $G$. Then $U_1$ 
is an  irreducible $\F I$-module, while $U_i= U_1 \cdot g_i$, for some $g_i \in G-I$ 
for all $i=1, \dots, n$,   and $n = |G:I|$. If $O$ is an orbit of the action of $I$ on $U_1$, 
then the union $\cap_{i=1}^n (O \cdot g_i)$ is actually a full orbit of the action of $G$ on 
$V$ say $\Omega$. So $|\Omega| = n \cdot |O|$. This way we get a one-to-one correspondence between 
the orbits of $G$ on $V$ and those of $I$ on $U_1$. 
 We conclude that the orbits of the action of $I$ on $U_1$ have distinct sizes.
Thus the $\F I$ irreducible module $U_1$ satisfies all the hypothesis of the lemma. 
So $I$  acts transitively on $U_1^{*}$, which implies that   $G$ acts transitively on $V^{*}$.
This contradicts the inductive hypothesis. Hence we get that $V$ is a
 primitive $\F G$-module. 

So $V$ is a primitive irreducible faithful $\F G$-module. If $F(G)$ is not cyclic then 
Lemma \ref{esp} implies that $V$ contains at least two regular $G$-orbits. But the $G$-orbits 
have all distinct sizes, and thus $F(G)$ is a cyclic group. 
Because $V$ is absolutely irreducible faithful primitive $\F G$-module, 
all the abelian normal subgroups 
of $G$ are cyclic and central.  We conclude that $F(G)$ is a cyclic central subgroup of $G$. 
But $G$ is solvable hence $C_G(F(G) \leq F(G)$. So $G= F(G)$ is a cyclic group. 

Now $V$ is an absolutely irreducible faithful $\F G$-module where $G$ is a cyclic group. 
Then $\dim_{\F} V= 1$, and $\F$ contains a primitive $t$-th root of $1$, where $t = |G|$.
 Furthermore, there exists a primitive $t$-th root of $1$,  $\zeta \in \F$ so that 
$v^g = \zeta v$, where $g$ is a generator of $G$ and  $v$ is any element of $V$.
Then the elements $ \{ v, \zeta v , \zeta^2 v, \cdots, \zeta^{t-1} v \} $ 
are all distinct and form 
 a $G$-orbit of $V$ of size $t$. 
This is true for any $v \in V^{*}$,  but we can't have more than 
one orbit of size $t$. Hence we only have one such orbit and thus 
$G$ acts transitively on $V^{*}$. 
Note also that  $|V| = |G| +1$.
\end{proof}

We can now prove Theorem \ref{thm2} that we restate 
\begin{thm*}
Let $G$ be a finite group, $N$ a normal subgroup of $G$ and $\theta$ an 
irreducible $G$-invariant character of $N$. Assume further that  all irreducible 
characters of $G$ lying above $\theta$ have distinct degrees.
If $G/N$ has odd order then $\theta$ is fully ramified in $G/N$, that is 
there exists only one irreducible character of $G$ lying above $\theta$.
\end{thm*}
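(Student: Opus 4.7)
The plan is to follow the same induction scheme as in the proof of Theorem~\ref{thm1}, substituting Lemma~\ref{orb} for the cyclic-quotient argument that worked in the supersolvable case. Work by induction on $|G|$ and then on $|G:N|$, assume for contradiction that $s := |\Irr(G|\theta)| \ge 2$, and use repeated applications of Clifford's theorem together with the inductive hypothesis at each non-invariance step to reduce to the case where $N$ is a cyclic central subgroup of $G$ and $\theta$ is a linear faithful character.

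Choose a chief section $M/N$ of $G$. Since $G/N$ is solvable of odd order, $M/N$ is elementary abelian of order $p^n$ with $p$ an odd prime. For every $\phi\in\Irr(M|\theta)$, Clifford's theorem makes induction a bijection between $\Irr(G(\phi)|\phi)$ and $\Irr(G|\phi)\subseteq\Irr(G|\theta)$, so the members of $\Irr(G(\phi)|\phi)$ have distinct degrees; since $G(\phi)/M$ has odd order, the inductive hypothesis applied to the triple $(G(\phi),M,\phi)$ yields that $\phi$ is fully ramified in $G(\phi)/M$, and hence there is a unique $\psi_\phi\in\Irr(G(\phi)|\phi)$. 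Thus $\Irr(G|\theta)$ is parametrized by the $G$-orbits on $\Irr(M|\theta)$ via $\phi\mapsto\chi_\phi:=\psi_\phi^G$, with
\[
\chi_\phi(1)=|G:G(\phi)|\cdot|G(\phi):M|^{1/2}\cdot\phi(1).
\]
Since $\phi(1)$ is constant on $\Irr(M|\theta)$, the distinct-degrees hypothesis forces pairwise distinct orbit sizes $|G:G(\phi)|$.

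Regard $V:=\Irr(M/N)$ as an irreducible $G$-module over the field of $p$ elements. It acts transitively on $\Irr(M|\theta)$ by multiplication; the stabilizer $Z\le V$ of a point is independent of the point and $G$-invariant, hence a $G$-submodule of $V$, so $Z\in\{0,V\}$. If $Z=V$ then $\Irr(M|\theta)=\{\phi_1\}$ with $\phi_1$ $G$-invariant, and the inductive hypothesis applied to $(G,M,\phi_1)$ delivers $s=1$, contradicting $s\ge 2$. So $Z=0$ and $\Irr(M|\theta)$ is a principal $V$-torsor. Using Glauberman's lemma for the coprime action of a Hall $p'$-subgroup $H/N$ of $G/N$ against the transitive $p$-group action of $V$ on $\Irr(M|\theta)$, pick an $H$-invariant $\phi_1\in\Irr(M|\theta)$ and use it as a basepoint to identify $\Irr(M|\theta)$ with $V$.

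The linear $G$-action on $V$ passes through $G/C_G(M/N)$, which has odd order and acts faithfully and irreducibly on $V$. Lemma~\ref{orb}, once the distinct-orbit-size data is transported from $\Irr(M|\theta)$ to $V$ through the $H$-fixed basepoint $\phi_1$, forces $\phi_1$ to be $G$-invariant and $G$ to act transitively on $\Irr(M|\theta)\setminus\{\phi_1\}$. Consequently $s=2$ with $\chi_1(1)=|G:M|^{1/2}$ and $\chi_2(1)=(p^n-1)^{1/2}\cdot|G:M|^{1/2}$, and Corollary~C of \cite{is1} (which asserts that exactly two characters of $G$ above $\theta$ must share a degree) gives $p^n-1=1$, contradicting $p$ odd. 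The main obstacle is this transfer step: the $H$-fixed basepoint $\phi_1$ is not a priori $G$-invariant, so the affine cocycle $c\colon G\to V$ determined by $\phi_1^g=c(g)\phi_1$ might be nonzero; reconciling its $p$-power-orbit constraint with the distinct orbit sizes and with Lemma~\ref{orb} to conclude that $c$ is a coboundary, and hence that $\phi_1$ can be chosen $G$-invariant, is where the technical heart of the proof will reside.
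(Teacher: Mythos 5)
Your Case 1 --- where some extension $\phi_1$ of $\theta$ to $M$ is $G$-invariant --- is essentially the paper's argument: use Gallagher to transport the distinct-orbit-size data to $V=\Irr(M/N)$, apply Lemma~\ref{orb} to get transitivity of $G/C_G(M/N)$ on $V^{*}$, conclude $s=2$ with degrees $|G:M|^{1/2}$ and $(p^n-1)^{1/2}|G:M|^{1/2}$, and contradict Corollary~C of \cite{is1}. The genuine gap is exactly where you flag it: you assert that the affine cocycle $c$ determined by the $H$-fixed basepoint can be shown to be a coboundary, ``hence $\phi_1$ can be chosen $G$-invariant,'' and you defer the justification. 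That deferral is not of routine technicalities --- it is the case on which the paper spends well over half of its proof, and the resolution there is \emph{not} that a $G$-invariant extension always exists. When $p\nmid|N|$ the canonical extension (Corollary 6.28 of \cite{is}) is automatically $G$-invariant, but when $p\mid|N|$ (which cannot be excluded, since $N$ ends up cyclic central with $\theta$ faithful) no $G$-invariant extension need exist, and one cannot hope to ``kill the cocycle.''

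What the paper does instead in that case is argue structurally: it first shows $O_{p'}(G/N)=1$ by comparing the degree formulas attached to two different chief sections, then proves every characteristic abelian subgroup of $O_p(G)$ is cyclic (a full page of work with Glauberman's lemma and the homomorphisms $X\to N$), deduces $O_p(G)=UZ$ with $U$ extraspecial or cyclic and $Z=Z(O_p(G))$ cyclic, and finally either finishes by induction or produces a normal $M$ with $|M/N|=p$ for which Glauberman \emph{does} yield a $G$-invariant extension, returning to Case 1. Alternatively, as the paper remarks at the end, Lemma 6.1 of \cite{is1} disposes of your missing case in one line: if no member of $\Irr(M|\theta)$ is $G$-invariant, then $C_G(M/N)$ is transitive on $\Irr(M|\theta)$, so the extensions form a single $G$-orbit and $s=1$ immediately. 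Either way, the ``technical heart'' you postpone requires an argument of a different nature from the one you sketch, so as written the proposal does not constitute a proof.
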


\begin{proof}
We use induction on $|G|$  and then on  $|G:N|$. 
Without loss we can assume that $N$ is a cyclic central subgroup of $G$ while $\theta$ is a $G$-invariant
 faithful character of $N$.

The character $\theta$ is either  fully ramified with respect to the
 chief section $M/N$ or it extends to $M$. 
In the first case we are done  by induction. If $\phi$ is the unique irreducible 
character of $M$ above $\theta$, then $\phi$ is $G$-invariant and all the characters 
of $G$ above it have distinct degrees.

So assume that  $\theta$ extends to $M$.
Let $|M/ N|= p^n$, where $p$ is an odd  prime number.
We also write $\phi_1, \dots, \phi_{p^n}$ for the extensions of $\theta$ to $M$.
Then all the irreducible characters of $G(\phi_i)$ lying above $\phi_i$ have distinct degrees,
for all $i=1, \dots, p^n$. 
Hence induction implies that the character $\phi_i$ is fully ramified in its 
stabilizer $G(\phi_i)$, for all $i=1, \dots, p^n$. Therefore there exits only one irreducible character of $G$ above every $G$-orbit of $\phi_i$. So if $s = |\Irr(G | \theta)|$, then $s$ is
also the number of orbits  of the $G$-action on $\{ \phi_i \}_{i=1}^{p^n}$.
Let $\phi_1, \dots, \phi_s$  be a representative of each one of these orbits.
Observe  that every $G$-orbit has size $|G:G(\phi_i)|$, where $\phi_i$ is its representative,  for  some $i=1, \dots, s$,
while the corresponding irreducible character $\chi_i$  of $G$ lying above  that $\phi_i$ has degree 
$$
\chi_i(1) = |G:G(\phi_i)| \cdot |G(\phi_i):M|^{1/2},
$$
for all $i=1, \dots, s$.
Hence  the $G$-orbits on $\{ \phi_i\}_{i=1}^{p^n}$ have distinct sizes.

{\bf Case 1:} Assume there exists an $i=1, \dots, s$ so that $\phi_i$ is $G$-invariant. 

Let $\phi_1 = \phi$ be a $G$-invariant extension of $\theta$ to $M$.
According to Gallagher's theorem  (Corollary 6.17 in \cite{is}), distinct linear characters
$\lambda \in \Irr(M/N)$ provide distinct irreducible characters $\lambda \cdot \phi$ 
of $M$ lying above $\theta$. So $G(\phi) \cap G(\lambda \phi) \leq G(\lambda)$, for all 
such $\lambda$. On the other hand $G(\lambda) \leq G(\phi) \cap G(\lambda \phi)$, 
because $G(\phi) = G$. 
We conclude that $\cap_{i=1}^{p^n}G(\phi_i) = \cap_{i=1}^{p^n}G(\lambda_i)$.
The later group equals $C_G(M/N)$ as $M/N$ is an elementary abelian group, and thus 
$$
\cap_{i=1}^{p^n}G(\phi_i) = C_G(M/N)=C.
$$  
The group $G/C$ acts faithfully and irreducibly  on $M/N$.
 So it acts faithfully and irreducibly  on $V= \Irr(M/N)$. 
We consider the latter as a finite vector space over $\F _p$. 
It is easy to see that there is a one--to--one correspondence between the 
 $G$-orbits of $V$ and the $G$-orbits of all the  extensions $\phi,
\lambda_1 \phi, \cdots ,\lambda_{p^n-1} \phi$ of $\theta$ to $M$,
where $\lambda_0 = 1$ corresponds to $\phi$ and
the size of two corresponding orbits is the same.
Hence all the $G$-orbits of $V$ have distinct sizes.
Then Lemma \ref{orb} implies that $G/C$ acts transitively on $V^*$. So we have only
 two orbits the trivial one and another one of length $p^n-1$. Thus we  only have 
two irreducible characters  $\chi_1, \chi_2$, lying above  $\theta$.
Hence   Corollary C in \cite{is1} implies  that $\chi_1(1) = \chi_2(1)$,
 contradicting the assumptions  of the theorem. Hence we can only have one 
irreducible character of $M$ lying above  $\theta$  in this case, and thus
  the theorem  follows.

{ \bf Case 2:} For for all $i=1, \dots, p^n$,  we have $G(\phi_i) < G$.

First note that $p$ divides $|N|$. If not, then Corollary 6.28 in \cite{is}
implies that $\theta$ has a unique canonical extension  $\phi \in \Irr(M)$ such that
$o(\phi)= o(\theta)$. Because $\theta$ is $G$-invariant the uniqueness of $\phi$ makes
 it also $G$-invariant. So  we are back to Case 1. Hence  $p/|N|$.

Let  $H/N$ be  a $p'$-Hall subgroup of  $G/N$. Then $H/N$ acts on $\Irr(M/N)$ as well as on
$\Irr( M | \theta )$.  The group $\Irr(M/N)$ acts transitively on $\Irr(M | \theta)$
by multiplication. Hence Glauberman's lemma implies that there exists $\phi_1$ above
$\theta$ with $|G: G(\phi_1)| = p^s$, for some $0 < s < n$.
Let $\chi_1$ be the unique irreducible character of $G$ lying above $\phi_1$.
Then
$$
\chi_1(1)= p^{-t/2} \cdot (|G:N|)^{1/2}, 
$$
where $t = n-s> 0$.
Now assume  we can find another chief section $K/N$ of $G$ with $|K:N| = q^{n'}$,
 for some prime $q \ne p$. Then $\chi_1$ lies above some extension character $\phi_1'
 \in \Irr(K | \theta)$. Note that $G(\phi_1') < G$ or else we would be in Case 1
for the chief section $K/ N$. So, as earlier, $\phi_1'$ is fully ramified in
$G(\phi_1') / K$, and the unique irreducible character $\psi_1' $ of $G(\phi_1')$
lying above $\phi_1'$ induces irreducibly to $\chi_1$.
So
\begin{multline*}
p^{-t/2} \cdot (|G:N)^{1/2} = \chi_1(1)=  |G:G(\phi_1') | \cdot (|G(\phi_1'):N|/q^{n'})^{1/2}=\\
(|G:N|)^{1/2} \cdot  (|G:G(\phi_1')|)^{1/2} \cdot q^{-n'/ 2},
\end{multline*}
which implies that
$q^{n'}= |G:G(\phi_1') | \cdot p^t$. This is impossible if $p \ne q$.
We conclude that $O_{p'}(G/ N)= 1$.
Thus, if $F$ is the Fitting subgroup of $G$ then $F= N_{p'} \times P$, where
$P = O_p(G) > N_p$.

\begin{claim} \label{ex-abel}
Every characteristic abelian subgroup of $P$ is cyclic.
\end{claim}
 Assume not. Then we can find an abelian characteristic subgroup $A$ of $F$
containing $N$ that is not cyclic. Hence there exists an abelian
non cyclic normal subgroup  $L$ of $G$ such that
 $L = N \times E \leq  N \cdot \Omega_p(P) \unlhd G$, where $E$ is an
elementary abelian  $p$-group
of order $p^k$ for some $k=1,2,  \dots $, and $L/N$ is a chief section of $G$.

Let $\psi_i$, for $i=1, \dots, p^k$,  be the extensions of $\theta$ to $L$.
Then $G(\psi_i) < G$, for all $i=1, \dots, p^k$,
  or else we would be done by Case 1. Furthermore, there exists an extension say
$\psi_1$ of $\theta$ to $L$, 
such that $|G: G( \psi_1) |$  is a power of $p$.
Clearly $|G:G(\psi_1)|  < p^k$, or else $\{ \psi_i\}_{i=1}^{p^k} $ form a unique  $G$-conjugacy class, which implies the theorem.

Let $S= C_G(L/N)$. Then  $S = C_G(L)$. Assume not, and let $X = C_G(L/N)/ C_G(L)$.
It is easy to see that $C_G(L/N) \cap G(\psi_1) = C_G(L)$. Hence
$|X| = |(G(\psi_1) \cdot C_G(L/N))/ G(\psi_1)| \leq |G:G(\psi_1)| < p^k$.
Hence $X$ is a $p$-group of order strictly less than $p^k$.
Because $E$ is an elementary abelian $p$-group of order $p^k$,  there exists
$e_1, \dots, e_k \in E$ of order $p$ so that $E = \prod_{i+1}^{k} <e_i>$.
Let $x \in X$. Then for every $i=1, \dots, n$ there exists $n_{i, x} \in N$ so that
$$
e_i^{x}= n_{i, x} e_i.
$$
Hence for every $i=1, \dots, k$, we   get a group homomorphism $f_i : X \to N$
so that
$f_i(x) = n_{i, x}$. Let $K_i$ be the kernel of $f_i$. Then $\cap_{i=1}^k K_i= 1$,
 because $X$ acts faithfully on $L = N E$  while $N $ is central in $G$.
Hence the map $f: X \to N^k$
defined as   $f(x) = (f_1(x), f_2(x),  \dots, f_k(x))$ is a group monomorphism.
Therefore $X$ is isomorphic to a subgroup $Y$ of $N^k$  of order  strictly less that $p^k$.
 On the other hand $|Y|$ is the product of the orders of the images  $|f_i(X)|$,
 for $i=1, \dots, k$. Since $|f_i(X)| = |X|/ |K_i|$ is a power of $p$, 
we conclude that there exists some $i$ so that $K_i = X$. 
Hence there exists some $i$ so that $X$ centralizes  the cyclic subgroup $<e_i>$ of $E$. 
This in turn implies that $C_L(X) > N$. But $C_L(X)$ is a $G$-invariant subgroup of $L$. 
Because $L/N$ is a chief section, we conclude that $C_L(X)= L$ and thus $X= 1$.

Assume that $R/S$ is a chief factor of $G$. Then  $C_{L/N}(R/S)$ is a $G$-invariant subgroup of 
 $L/N$. As $L/N$ is a chief factor of $G$, we conclude that $C_{L/N}(R/S)$ is trivial. 
Hence  $R/S$ is a  $q$-group, for some prime $q \ne p$.

The group $R/S$ acts faithfully  on $\Irr(L/N)$ 
as well as on $\Irr(L | \theta)$.  Furthermore, $\Irr(L/N)$ 
acts transitively on $\Irr(L | \theta)$.  Hence Glauberman's lemma implies that there
exists $\psi_1 \in \Irr(L | \theta)$  that is $R/S$-invariant. Since $S =C_G(L)$ 
fixes $\psi_1$, we conclude that $\psi_1$ is $R$-invariant. 
Hence $R$ fixes all the $G$-conjugates of $\psi_1$. Therefore $R$ and thus $R/S$,
  fixes more than one 
character in $\Irr(L | \theta)$ (because $G(\psi_1) < G$). Glauberman's lemma implies that 
$C_{\Irr(L/N)}(R/S)$ acts transitively  on the set of fixed points of $R/S$ on $\Irr(M | \theta)$.
 So  $C_{\Irr(L/N)}(R/S)$ is not trivial. This in turn implies (because $L/ N$ is abelian)  that 
$C_{L/N}(R/S)$ is not a trivial group. 
This final contradiction shows that $ L $ is a cyclic group. So 
$N \cdot \Omega_p(P)= N$, and thus every abelian characteristic subgroup of $F$ is cyclic.

So $F= N \cdot P = N_{p'} \times P $, where $P= U Z$ and  $U $ is either an extra special  
$p$-group of exponent $p$ or cyclic of order $p$  and   $Z= C_P(U)= Z(P)$ 
is a cyclic group.  

Assume first that $Z=N_p$. 
If $F > N$, that is $|U|>p $ then the fact that 
  $\theta \in \Irr(N)$ is faithful, implies that 
exists a unique irreducible character of $F$ lying above $\theta$. 
In this case we are done by induction. 
If  $F = N $ then because $N \leq Z(G)$ and $G$ is solvable we get that $G \leq C_G(F) 
\leq F$. Hence $G= F = N$ and the theorem follows.   

Assume now that $Z > N_p$. 
Then the fact that $Z=Z(P)$ is a cyclic group implies that we can get
 a normal subgroup $M$ of $G$ with $F \geq M > N$ and $|M / N| = p$.
Observe that Glauberman's lemma implies that there exists an 
extension $\phi$ of $\theta$ to $M$ whose stabilizer $G(\phi)$ has index $|G:G(\phi)|$ 
 a power of $p$. Thus  $\phi$ is $G$-invariant and we are back to Case 1.

This completes the proof of the theorem. 

\end{proof}

As it was remarked  to us by E. C. Dade, Lemma \ref{orb} can be replaced by the 
following more general remark.
\begin{lemma}
If a group $G$ of odd order acts on a non-trivial vector space $V$ over a
finite field $F$ of odd characteristic, then there are always two
$G$-orbits of equal length in $V^* := V - \{ 0 \}$. 
\end{lemma}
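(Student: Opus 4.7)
The plan is to exploit the involution $\sigma:V^*\to V^*$ defined by $\sigma(v)=-v$. Since $F$ has odd characteristic, $-v\ne v$ for every $v\in V^*$, so $\sigma$ is a fixed-point-free involution on $V^*$. Moreover, because the $G$-action on $V$ is linear, $\sigma$ commutes with every $g\in G$, and therefore $\sigma$ descends to an involution on the set of $G$-orbits of $V^*$ via $Gv\mapsto G(-v)$.

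First I would observe that $G_v=G_{-v}$, so the orbits $Gv$ and $G(-v)$ always have the same cardinality. Thus the orbit-level involution preserves orbit lengths, and it suffices to show that it has no fixed orbits: any orbit paired with a distinct orbit of equal length would finish the proof.

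Next I would show that the induced involution on orbits is free, that is, $G(-v)\ne Gv$ for every $v\in V^*$. Indeed, if $-v=gv$ for some $g\in G$, then $g^2v=v$, so $g^2\in G_v$. Since $|G|$ is odd, squaring is a bijection on the cyclic subgroup $\langle g\rangle$, hence $g\in G_v$, forcing $v=gv=-v$ and therefore $2v=0$. As $F$ has odd characteristic, this yields $v=0$, contradicting $v\in V^*$.

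Combining these two facts, the $G$-orbits in $V^*$ partition into pairs $\{Gv,G(-v)\}$ of equal length. Since $V$ is non-trivial, $V^*$ is non-empty, so at least one such pair exists and furnishes two distinct $G$-orbits of equal length. Both odd-order hypotheses are used exactly in ruling out fixed orbits under $\sigma$; everything else is purely formal, so I do not anticipate a serious obstacle.
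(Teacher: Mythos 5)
Your proposal is correct and follows essentially the same route as the paper, which proves the lemma via the same observation that the orbit of $v$ and the orbit of $-v \ne v$ have equal length, so that the $G$-orbits on $V^*$ pair off. You in fact supply a detail the paper's one-line sketch leaves implicit, namely the verification that $Gv \ne G(-v)$ (using that $|G|$ is odd, so $g^2 \in G_v$ forces $g \in G_v$), which is exactly where the odd-order hypothesis on $G$ is needed.
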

The above lemma gives a shorter proof of our Theorem \ref{thm2}. (For its proof,  note that the orbit of any nonzero element $v$
in $V^*$ has the same size  as the orbit of $-v \ne v$. So the $G$-orbits on $V^*$ appear in pairs of the same length.) 
The reason we kept Lemma \ref{orb}  in this paper,  is that some type of  generalization  seems  possible. 
This in turn could give 
a proof of  Higgs conjecture in the even case.
Actually, Berkovich, Isaacs and Kazarin have already shown (see  Theorem 3.4 in \cite{bik} ) 
that if a $2$-group $P$ acts irreducibly on a nontrivial vector space $V$ so that
 the orbits of this action have distinct sizes,  then  either $P$ acts transitively on $V^{*}$, 
or there are two orbits of $P$ on $V^{*}$ of sizes $2^{2k}$ and $2^{2l+1}$,with $k, l > 0$,    or 
$|V|=81$ and there are exactly two $P$-orbits on  $V^{*}$  of known sizes.

The following is Lemma 6.1 in \cite {is1} and was brought to our attention by M. Isaacs.
\begin{lemma} 
Let  $N$ be a normal subgroup of the $p$-solvable group  $G$, and   and   
$\theta$ be  a $G$-invariant  irreducible character of $N$.
If $M/N$ is a $p$-chief section of  $G$ and $C= C_G(M/N)$, then either  \\
(1) some member $\phi \in \Irr(M|\theta)$ is $G$-invariant, or \\
(2) $C$ is transitive on the set $\Irr(M |\theta)$.
\end{lemma}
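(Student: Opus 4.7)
The plan is to analyze the $G$-action on $\Irr(M|\theta)$ by exploiting that $M/N$ is an irreducible $G$-module together with the $p$-solvability hypothesis. First I would reduce to the case where $\theta$ extends to $M$. Since $M/N$ is abelian and $\theta$ is $M$-invariant, the $2$-cocycle class in $H^2(M/N,\mathbb{C}^{\times})$ associated to $(M,N,\theta)$ produces an alternating bilinear form $\beta$ on $M/N$, and for abelian $M/N$ one has: $\theta$ extends to $M$ iff $\beta$ is trivial. Because $\theta$ is $G$-invariant, so is $\beta$; hence the radical of $\beta$ is a $G$-submodule of the chief section $M/N$ and is either $0$ or $M/N$. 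Trivial radical means $\beta$ is non-degenerate, so $\theta$ is fully ramified in $M/N$ and $\Irr(M|\theta)$ consists of a single character, automatically $G$-invariant, giving conclusion $(1)$. So I may assume $\theta$ extends to $M$.

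Under this assumption $\Irr(M|\theta)$ is a torsor for $V := \Irr(M/N)$ under multiplication, and $V$ is a faithful irreducible $G/C$-module. Fix $\phi \in \Irr(M|\theta)$ and define $\xi : C \to V$ by $c\cdot\phi = \xi(c)\phi$. Since $C$ acts trivially on $V$, the map $\xi$ is a group homomorphism; writing $g^{-1}\phi = \eta\phi$ with $\eta\in V$ and expanding $(gcg^{-1})\phi$ shows $\xi(gcg^{-1}) = g\cdot\xi(c)$, so $K := \mathrm{Im}(\xi)$ is a $G/C$-submodule of $V$. By irreducibility $K = V$ or $K = 0$. If $K = V$, then for any two extensions $\phi_i,\phi_j$ there is $c \in C$ with $\xi(c) = \phi_j\phi_i^{-1}$, so $c\phi_i = \phi_j$ and $C$ is transitive --- conclusion $(2)$.

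It remains to handle the case $K = 0$, where $C$ fixes every extension and the $G$-action descends to an affine action of $G/C$ on the $V$-torsor $\Irr(M|\theta)$, encoded via the base point $\phi$ by a $1$-cocycle $\mu \in Z^1(G/C, V)$; a $G$-invariant extension exists precisely when $[\mu] = 0$ in $H^1(G/C, V)$. My plan here is to show $H^1(G/C, V) = 0$. If $G = C$ the conclusion is immediate; otherwise any normal $p$-subgroup of $G/C$ has a nonzero fixed-point subspace in $V$, which by irreducibility must be all of $V$, and faithfulness then forces the subgroup to be trivial. Thus $O_p(G/C) = 1$, and $p$-solvability combined with Fitting's theorem gives $L := O_{p'}(G/C) = F(G/C) \neq 1$. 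Coprime cohomology gives $H^1(L, V) = 0$ because $|L|$ is prime to $p = \mathrm{char}\,V$, and the same irreducibility-plus-faithfulness argument applied to $V^L$ (a $G/C$-submodule since $L \unlhd G/C$) forces $V^L = 0$. The low-degree inflation--restriction sequence $0 \to H^1((G/C)/L, V^L) \to H^1(G/C, V) \to H^1(L, V)^{(G/C)/L}$ then collapses to $H^1(G/C, V) = 0$, producing the $G$-invariant extension needed for $(1)$.

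The main obstacle, I expect, is the cohomological finish in the $K = 0$ branch: the non-existence of a $G$-invariant extension has to be recognized as the non-vanishing of a specific class in $H^1(G/C, V)$, and the vanishing of that $H^1$ must be extracted from $p$-solvability through the $O_p$/$O_{p'}$ analysis together with inflation--restriction --- Glauberman's lemma alone yields only a $p'$-Hall-invariant extension, not a $G$-invariant one. The preliminary reduction via the alternating form, and the verification that $\xi$ is a $G$-equivariant homomorphism, are routine but demand careful bookkeeping with the torsor structure on $\Irr(M|\theta)$.
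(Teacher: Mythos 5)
The paper never proves this lemma: it is quoted verbatim as Lemma 6.1 of Isaacs \cite{is1} and used only to give an alternative reason why Case 1 suffices in Theorem \ref{thm2}, so there is no in-paper argument to compare yours against --- your proof has to stand on its own, and it does. The opening reduction via the $G$-invariant alternating form attached to the class of $(M,N,\theta)$ in $H^2(M/N,\mathbb{C}^\times)$ correctly splits the problem: since $\theta$ is $G$-invariant the radical of the form is a $G$-submodule of the chief factor $M/N$, so it is either trivial (non-degenerate form, $\theta$ fully ramified, the unique member of $\Irr(M|\theta)$ is $G$-invariant, conclusion (1)) or all of $M/N$ (the class vanishes because $\operatorname{Ext}(M/N,\mathbb{C}^\times)=0$, so $\theta$ extends). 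Your verification that $\xi\colon C\to V=\Irr(M/N)$ is a homomorphism with $\xi(g^{-1}cg)=\xi(c)^g$ is correct, so $\operatorname{Im}\xi$ is a $G/C$-submodule of the irreducible module $V$, giving either conclusion (2) or the passage to the torsor picture; and the cohomological finish is sound and is exactly where $p$-solvability enters: $O_p(G/C)=1$ by fixed points plus irreducibility plus faithfulness, hence $O_{p'}(G/C)\neq 1$ by $p$-solvability, $V^{L}=0$ by the same faithfulness argument, and inflation--restriction together with coprime vanishing of $H^1(L,V)$ kills $H^1(G/C,V)$, producing the invariant extension. One small inaccuracy: $O_{p'}(G/C)$ need not equal $F(G/C)$ for a $p$-solvable group (it need not even be nilpotent), but all your argument uses is that it is a nontrivial normal $p'$-subgroup, which does follow from $O_p(G/C)=1$, so nothing is harmed. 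What your route buys is a self-contained, conceptually transparent proof in which the dichotomy of the lemma is visibly the dichotomy ``$\operatorname{Im}\xi=V$ or $\operatorname{Im}\xi=0$'' for an irreducible module, with the $H^1$-vanishing isolating the only place the $p$-solvability hypothesis is genuinely needed.
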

This provides another way to show that we only need to handle Case 1 in Theorem \ref{thm2}.

\end{document}